\newcommand{\supp}{\text {\rm supp}}
\newcommand{\Ad}{\text {\rm Ad}}
\def\i{^{-1}}
\def\ge{\geqslant}
\def\le{\leqslant}
\def\<{\langle} 
\def\>{\rangle}
\def\G{\Gamma}
\def\d{\delta}
\def\L{\Lambda}
\def\l{\lambda}
\def\ZZ{\mathbb Z}
\def\CC{\mathbb C}
\def\qq{\mathbf q}
\def\co{\mathcal O}
\def\bH{\mathbb H}
\def\subset{\subseteq}
\theoremstyle{plain}
\newtheorem{thm}{Theorem}[section] 
\newtheorem*{thm*}{Theorem} 
 \newtheorem{prop}[thm]{Proposition}
 \newtheorem{lem}[thm]{Lemma}
 \newtheorem{cor}[thm]{Corollary}
\theoremstyle{definition}
\newtheorem{example}[thm]{Example}
\theoremstyle{remark}
\newtheorem*{rmk}{Remark}
\newtheorem*{claim*}{Claim}
\begin{document}

\author{Xuhua He}
\address{Department of Mathematics, University of Maryland, College Park, MD 20742, USA and department of Mathematics, HKUST, Hong Kong}
\email{xuhuahe@math.umd.edu}
\title{Centers and Cocenters of $0$-Hecke algebras}
\keywords{finite Coxeter groups, $0$-Hecke algebras, Conjugacy classes}

\dedicatory{Dedicated to David Vogan on his 60th birthday}

\begin{abstract}
In this paper, we give explicit descriptions of the centers and cocenters of $0$-Hecke algebras associated to finite Coxeter groups. 
\end{abstract}

\maketitle

\section*{Introduction}

\subsection{} Iwahori-Hecke algebras $H_q$ are deformations of the group algebras of finite Coxeter groups $W$ (with nonzero parameters $q$). They play an important role in the study of representations of finite groups of Lie type. 

In 1993, Geck and Pfeiffer \cite{GP93} discovered some remarkable properties of the minimal length elements in their conjugacy classes in $W$ (see Theorem \ref{min-elt}). Based on these properties, they defined the ``character table'' for Iwahori-Hecke algebras. They also gave a basis of the cocenter of Iwahori-Hecke algebras, using minimal length elements. Later, Geck and Rouquier \cite{GR} gave a basis of the center of Iwahori-Hecke algebras. It is interesting that both centers and cocenters of Iwahori-Hecke algebras are closely related to minimal length elements in the finite Coxeter groups and their dimensions both equal the number of conjugacy classes of the finite Coxeter groups. 

\subsection{} The $0$-Hecke algebra $H_0$ was used by Carter and Lusztig in \cite{CL} in the study of $p$-modular representations of finite groups of Lie type. It is a deformation of the group algebras of finite Coxeter groups (with zero parameter). In this paper, we study the center and cocenter of $0$-Hecke algebras $H_0$. We give a basis of the center of $H_0$ in Theorem \ref{0-center} and a basis of the cocenter of $H_0$ in Theorem \ref{main}. 

\subsection{} It is interesting to compare the (co)centers of $H_q$ and $H_0$. Let $W_{\min}$ be the set of minimal length elements in their conjugacy classes in $W$. There are two equivalence relations $\sim$ and $\approx$, on $W_{\min}$ (see \S \ref{setting} for the precise definition). Hence we have the partition of $W_{\min}$ into $\sim$-equivalence classes and $\approx$-equivalence classes. The second partition is finer than the first one. 

The center and cocenter of $H_q$ have basis sets indexed by the set of conjugacy classes of $W$, which are in natural bijection with $W_{\min}/\sim$. The cocenter of $H_0$ has a basis set indexed by $W_{\min}/\approx$ and the center of $H_0$ has a basis set indexed by $W_{\max}/\approx$. Here $W_{\max}/\approx$ is defined using maximal length elements instead and there is a natural bijection between $W_{\max}/\approx$ with the set of $\approx$-equivalence classes of minimal length elements in their ``twisted'' conjugacy classes in $W$. In general, the number of elements in $W_{\max}/\approx$ is different from the number of elements in $W_{\min}/\approx$. 

\subsection{} The paper is organized as follows. In section 1, we recall some properties of the minimal length and maximal length elements. In section 2, we recall the results on the center and cocenter of $H_q$. We give parameterizations of $W_{\min}/\approx$ and $W_{\max}/\approx$ in section 3. In section 4, we give a basis of the center of $H_0$ and in section 5, we give a basis of the cocenter of $H_0$. In section 6, we describe the image of a standard element $t_w$ in the cocenter of $H_0$ and discuss some applications to the class polynomials of $H_q$. 



\section{Finite Coxeter groups}

\subsection{} Let $S$ be a finite set. A Coxeter matrix $(m_{s, s'})_{s, s' \in S}$ is a matrix with entries in $\mathbb N \cup \{\infty\}$ such that $m_{s s}=1$ and $m_{s, s'}=m_{s', s} \ge 2$ for all $s \neq s'$ in $S$. The Coxeter group $W$ associated to the Coxeter matrix $(m_{s, s'})$ is the group generated by $S$ with relations $(s s')^{m_{s, s'}}=1$ for $s, s' \in S$ with $m_{s, s'}< \infty$. The Coxeter group $W$ is equipped with the length function $\ell: W \to \mathbb N$ and the Bruhat order $\le$. 

For any $J \subset S$, let $W_J$ be the subgroup of $W$ generated by elements in $J$. Then $W_J$ is also a Coxeter group. 

\subsection{}\label{setting} Let $\d$ be an automorphism of $W$ with $\d(S)=S$. We say that the elements $w, w' \in W$ are $\d$-conjugate if there exists $x \in W$ such that $w'=x w \d(x) \i$. Let $cl(W)_\d$ be the set of $\d$-conjugacy classes of $W$. We say that a $\d$-conjugacy class $\co$ is {\it elliptic} if $\co \cap W_J=\emptyset$ for any $J=\d(J) \subsetneqq S$. 

For any $w \in W$, let $\supp(w)$ be the set of simple reflections that appear in some (or equivalently, any) reduced expression of $w$. Set $\supp_\d(w)=\cup_{i \ge 0} \d^i(\supp(w))$. Then $\co \in cl(W)_\d$ is elliptic if and only if $\supp_\d(w)=S$ for any $w \in \co$.

For $w, w' \in W$ and $s \in S$, we write $w \xrightarrow{s}_\d w'$ if $w'=s w \d(s)$ and $\ell(w') \le \ell(w)$.  We write $w \to_\d w'$ if there exists a sequence $w=w_0, w_1, \cdots, w_n=w'$ of elements in $W$ such that for any $k$, $w_{k-1} \xrightarrow{s}_\d w_k$ for some $s \in S$. We write $w \approx_\d w'$ if $w \to_\d w'$ and $w' \to_\d w$. 

We say that the two elements $w, w' \in W$ are {\it elementarily strongly $\d$-conjugate} if $\ell(w)=\ell(w')$ and there exists $x \in W$ such that $w'=x w \d(x) \i$, and $\ell(x w)=\ell(x)+\ell(w)$ or $\ell(w \d(x) \i)=\ell(x)+\ell(w)$. We say that $w, w'$ are {\it strongly $\d$-conjugate} if there exists a sequence $w=w_0, w_1, \cdots, w_n=w'$ such that for each $i$, $w_{i-1}$ is elementarily strongly $\d$-conjugate to $w_i$. We write $w \sim_\d w'$ if $w$ and $w'$ are strongly $\d$-conjugate. It is easy to see that %

\begin{lem}
If $w, w' \in W$ with $w \approx_\d w'$, then $w \sim_\d w'$. 
\end{lem}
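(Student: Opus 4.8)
The plan is to reduce the statement to a single elementary step and then dispose of that step by a short case analysis on descents.

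First I would record the length constraint imposed by $w \approx_\d w'$. By definition there is a chain $w = w_0 \xrightarrow{s_1}_\d w_1 \xrightarrow{s_2}_\d \cdots \xrightarrow{s_n}_\d w_n = w'$ witnessing $w \to_\d w'$, and each arrow cannot increase length, so $\ell(w') \le \ell(w)$; the relation $w' \to_\d w$ gives the reverse inequality, hence $\ell(w) = \ell(w')$. Since the total length is unchanged while every individual step can only preserve or decrease length, each step must in fact preserve it: $\ell(w_{k-1}) = \ell(w_k)$ for all $k$. Because $\sim_\d$ is by definition the transitive closure of elementary strong $\d$-conjugacy, it suffices to prove that a single length-preserving elementary step $w \xrightarrow{s}_\d w'$, i.e. $w' = s w \d(s)$ with $\ell(w') = \ell(w)$, already forces $w \sim_\d w'$.

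For the single step I would take $x = s$ throughout. Note $\d(x)\i = \d(s)\i = \d(s)$, since $\d(s)$ is an involution, so $x w \d(x)\i = s w \d(s) = w'$ holds automatically and only the length-additivity clause needs checking. I distinguish three cases. If $\ell(sw) = \ell(w) + 1$, then $\ell(xw) = \ell(sw) = \ell(x) + \ell(w)$, so $w$ and $w'$ are elementarily strongly $\d$-conjugate. If $\ell(sw) = \ell(w) - 1$ but $\ell(w\d(s)) = \ell(w) + 1$, then $\ell(w \d(x)\i) = \ell(w\d(s)) = \ell(x) + \ell(w)$, and again $w \sim_\d w'$. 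The remaining case is $\ell(sw) = \ell(w) - 1$ and $\ell(w\d(s)) = \ell(w) - 1$, i.e. $s$ is a left descent of $w$ and $\d(s)$ is a right descent of $w$, yet $\ell(s w \d(s)) = \ell(w') = \ell(w)$. Here I would invoke the standard deletion property of Coxeter groups: for $s, t \in S$ with $\ell(sw) < \ell(w)$ and $\ell(wt) < \ell(w)$, the equality $\ell(swt) = \ell(w)$ forces $sw = wt$. Applying this with $t = \d(s)$ gives $sw = w\d(s)$, whence $w' = s w \d(s) = w$ and $w \sim_\d w'$ holds trivially. Concatenating the conclusions over all steps of the chain, and using that $\sim_\d$ is transitive by juxtaposition of sequences, yields $w \sim_\d w'$.

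The main obstacle is precisely this third case: the naive hope that two simultaneous descents $sw < w$ and $w\d(s) < w$ should force $\ell(sw\d(s)) < \ell(w)$ is false in general (for $w$ close to a longest element, $\d$-conjugation by $s$ can preserve length), so one genuinely needs the deletion property, which collapses that case to the triviality $w' = w$. The routine checks are that the three cases are exhaustive — they are, since multiplication by a simple reflection changes length by exactly $\pm 1$ — and that the deletion property applies verbatim to $t = \d(s)$, which it does because $\d(s) \in S$.
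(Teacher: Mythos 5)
Your proof is correct, and it fills in exactly the step the paper treats as immediate: the paper states this lemma with no proof at all (``It is easy to see that''), so your write-up is the missing argument rather than an alternative to one. Your reduction is sound --- both chains witnessing $w \approx_\delta w'$ consist of length-preserving steps, and $\sim_\delta$ is transitive by concatenation --- and your three cases for a single step $w' = s w \delta(s)$, $\ell(w')=\ell(w)$, are exhaustive. You also correctly identify the only nontrivial point: when $s$ is a left descent of $w$ and $\delta(s)$ a right descent, the hypotheses $\ell(sw)<\ell(w)$, $\ell(w\delta(s))<\ell(w)$, $\ell(sw\delta(s))=\ell(w)$ do force $sw = w\delta(s)$ (this is the standard exchange-condition lemma, cf.\ \cite[Lemma 1.2.6]{GP00}: write $u = sw$, so $u < ut$ with $t=\delta(s)$, and $s(ut)<ut$ together with the exchange condition applied to the reduced word of $ut$ ending in $t$ yields $s(ut)=u$, since the alternative deletion would give $\ell(su)<\ell(u)$, contradicting $su=w>u$); hence $w'=w$ and the step is vacuous. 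One cosmetic remark: in cases 1 and 2 you should note that $x=s$ gives elementary strong $\delta$-conjugacy in the direction from $w$ to $w'$, which is all that is needed since the definition of $\sim_\delta$ is symmetric in $w_{i-1}, w_i$ once $\ell(w)=\ell(w')$ holds --- and it does, as you checked.
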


Note that $\sim_\d$ and $\approx_\d$ are both equivalence relations. For any $\co \in cl(W)$, let $\co_{\min}$ be the set of minimal length elements in $\co$ and $\co_{\max}$ be the set of maximal length elements in $\co$. Since $\sim_\d$ and $\approx_\d$ are compatible with the length function, both $\co_{\min}$ and $\co_{\max}$ are unions of $\sim_\d$-equivalence classes and unions of $\approx_\d$-equivalence classes. 

Let $W_{\d, \min}=\sqcup_{\co \in cl(W)_\d} \co_{\min}$ and $W_{\d, \max}=\sqcup_{\co \in cl(W)_\d} \co_{\max}$. Let $W_{\d, \min}/\sim_\d$ be the set of $\sim_\d$-equivalence classes in $W_{\min}$. We define $W_{\d, \min}/\approx_\d$, $W_{\d, \max}/\sim_\d$ and $W_{\d, \max}/\approx_\d$ in a similar way. 

If $\d$ is the identity map, then we may omit $\d$ in the subscript. 

\

The following result is proved in \cite[Theorem 1.1]{GP93}, \cite[Theorem 2.6]{GKP} and \cite[Theorem 7.5]{He05} (see also \cite{HN12} for a case-free proof). 

\begin{thm}\label{min-elt}
Let $W$ be a finite Coxeter group and $\co$ be a $\d$-conjugacy class of $W$. Then

(1) For any $w \in \co$, there exists $w' \in \co_{\min}$ such that $w \to_\d w'$. 

(2) $\co_{\min}$ is a single strongly $\d$-conjugate class. 

(3) If $\co$ is elliptic, then $\co_{\min}$ is a single $\approx_\d$-equivalence class. 
\end{thm}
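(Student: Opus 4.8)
The plan is to prove the three statements simultaneously by a double induction: on the rank $|S|$ and, for fixed rank, on the length $\ell(w)$. The organizing principle is to separate the \emph{non-elliptic} case, where one reduces to a proper standard parabolic subgroup, from the \emph{elliptic} case, which carries the real content. Throughout I use that $\to_\d$ is length-non-increasing, so a $\to_\d$-chain between two elements of the common minimal length $\ell_{\min}(\co)$ must be length-preserving; consequently, on $\co_{\min}$ the relation $\approx_\d$ is exactly connectivity under the length-preserving cyclic shifts $w\mapsto sw\d(s)$.

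\textbf{Reduction to the elliptic case.} First I would isolate a partial-conjugation lemma, itself proved inside the induction on length: for non-elliptic $\co$ there is a $\d$-stable proper subset $J\subsetneq S$ such that (i) every $w\in\co$ satisfies $w\to_\d w'$ for some $w'\in W_J$, and (ii) $\co\cap W_J$ is a single $\d|_J$-elliptic class $\co_J$ of $W_J$ with $\ell_{\min}(\co)=\ell_{\min}(\co_J)$, containing all minimal length elements of $\co$. The point is that inside a standard parabolic the length function is the restriction of $\ell$, so $\to_\d$, $\sim_\d$, $\approx_\d$ computed in $W_J$ coincide with, and hence imply, the corresponding relations in $W$. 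Granting this, (1) for $\co$ follows from (1) for $\co_J$ in the smaller group $W_J$ together with $w\to_\d W_J$; and (2) follows from (2) for $\co_J$ (which is elliptic in $W_J$, so by induction is a single $\approx_{\d}$-class there, hence a single $\sim_\d$-class there by the preceding Lemma, hence a single $\sim_\d$-class in $W$). Statement (3) is vacuous since $\co$ is not elliptic. Thus the induction on $|S|$ reduces everything to elliptic classes.

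\textbf{The elliptic case.} Here $\supp_\d(w)=S$ for all $w\in\co$ and no parabolic reduction is available. For (1) it suffices to prove the local version (1$'$): if $w$ is elliptic and not of minimal length, then $w\to_\d w_1$ for some $w_1$ with $\ell(w_1)<\ell(w)$; induction on $\ell(w)$ then gives (1). To prove (1$'$) I would run a standard computation of $\ell(sw\d(s))$ in terms of whether $s$ lies in the left descent set of $w$ and $\d(s)$ in the right descent set: when both hold, $\ell(sw\d(s))\in\{\ell(w)-2,\ell(w)\}$, and the value $\ell(w)-2$ is the sought length drop, while the remaining configurations either preserve or raise the length. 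The claim is that from a non-minimal elliptic $w$ one can chain length-preserving cyclic shifts until a length-$2$ drop becomes available. For (2) and (3) I would then show that any two elements $w,w'\in\co_{\min}$ are connected by length-preserving cyclic shifts, i.e. that the shift-graph on $\co_{\min}$ is connected; this gives $w\approx_\d w'$, and a fortiori $w\sim_\d w'$ by the preceding Lemma.

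\textbf{Main obstacle.} The genuinely hard point is the elliptic case: both the absence of a trap in (1$'$) and the connectivity of $\co_{\min}$ under shifts. There is no uniform ``nice'' minimal representative to anchor the argument (outside special classes such as $\d$-twisted Coxeter elements), so the bookkeeping of the descent cases must be pushed through by a delicate secondary induction; historically this was done type-by-type for the exceptional groups and uniformly only later. I expect the crux to be a bridging lemma showing that from any non-minimal elliptic $w$ one reaches, by length-non-increasing cyclic shifts, an element admitting a length-$2$ drop, and dually that $\co_{\min}$ cannot split into two shift-disconnected pieces. This is exactly where ellipticity (full $\d$-support) is used, and it also explains why the theorem only asserts $\sim_\d$ (not $\approx_\d$) for general classes: across the parabolic embedding $W_J\hookrightarrow W$ the relation $\approx_\d$ need not be preserved, whereas $\sim_\d$ is.
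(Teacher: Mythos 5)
First, note that the paper does not prove Theorem \ref{min-elt} at all: it is quoted from \cite{GP93}, \cite{GKP}, \cite{He05} (with \cite{HN12} for a case-free proof), so your attempt can only be measured against those proofs, whose overall architecture (parabolic reduction plus an elliptic core) your outline does share. However, your reduction step contains a genuine error: the claimed partial-conjugation lemma, in particular item (ii) asserting that for non-elliptic $\co$ there is a \emph{single} $\d$-stable $J \subsetneqq S$ such that $\co \cap W_J$ is a single elliptic class of $W_J$ containing \emph{all} minimal length elements of $\co$, is false. Already in $W=S_3$ (with $\d=\mathrm{id}$), the class of the transpositions has $\co_{\min}=\{s_1, s_2\}$, whose elements have supports $\{s_1\}$ and $\{s_2\}$; no proper parabolic contains both. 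In general $\co_{\min}=\sqcup_{(J,C),\, f(J,C)=\co} C_{\min}$ runs over \emph{several} pairs, the various $J$'s conjugate under $W^\d$ but distinct (this is exactly Proposition \ref{g-j} of the paper). Worse, if your (ii) held, your scheme would prove that $\co_{\min}$ is a single $\approx_\d$-class for \emph{every} class (apply your elliptic case (3) inside $W_J$ and transport the relation to $W$), contradicting Example \ref{eg}: in $S_3$ one has $\sharp(W_{\min}/\approx)=4>3=\sharp\, cl(W)$, precisely because $s_1 \not\approx s_2$. This is why part (2) of the theorem asserts only $\sim_\d$: connecting $s_1$ to $s_2$ requires the elementary strong conjugation by $x=s_1s_2$ (note $\ell(xs_1)=\ell(x)+\ell(s_1)$), a conjugator lying in no proper parabolic, and producing such conjugators across the different pairs $(J,C)$ is real content of \cite{GP93,GKP}, not a formal consequence of the elliptic case. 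Your closing remark that ``$\approx_\d$ need not be preserved across the parabolic embedding'' is also backwards: since $\ell|_{W_J}$ is the restriction of $\ell$, both $\approx_\d$ and $\sim_\d$ computed in $W_J$ do imply the same relations in $W$; the issue is that $\co_{\min}$ is not contained in one $W_J$ to begin with.

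Second, the elliptic core --- that a non-minimal elliptic element always admits, after length-preserving cyclic shifts, a length-$2$ drop, and that $\co_{\min}$ is shift-connected --- is the entire content of (3) and the engine of (1), and you explicitly defer it to ``a delicate secondary induction'' done ``historically type-by-type.'' The descent-set bookkeeping you sketch ($\ell(sw\d(s)) \in \{\ell(w)-2, \ell(w), \ell(w)+2\}$ according to the four descent configurations) is where every attempt at a naive induction stalls; closing it required either computer-assisted case analysis for the exceptional types (\cite{GP93,GKP,He05}) or the genuinely new geometric input of \cite{HN12}, which works in the reflection representation with convexity arguments rather than pure Coxeter combinatorics. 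So even setting aside the faulty reduction, the proposal is a roadmap that names the crux without supplying an argument for it, and the one step it does make precise (the reduction lemma) is refuted inside this very paper by Proposition \ref{g-j} and Example \ref{eg}.
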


As a consequence of Theorem \ref{min-elt}, it is proved in \cite[Corollary 4.5]{He05} that the set of minimal length elements in $\co$ coincides with the set of minimal elements in $\co$ with respect to the Bruhat order $\le$. 

\begin{cor}\label{min}
Let $W$ be a finite Coxeter group and $\co$ be a $\d$-conjugacy class of $W$. Then $\co_{\min}=\{w \in \co; w' \nless w \text{ for any } w' \in \co\}$. 
\end{cor}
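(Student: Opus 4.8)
Write $A=\co_{\min}$ and $B=\{w\in\co : w'\nless w \text{ for all } w'\in\co\}$; the goal is $A=B$. The inclusion $A\subseteq B$ is immediate and uses nothing beyond the definition of the Bruhat order: if $w\in\co_{\min}$ and some $w'\in\co$ satisfied $w'<w$, then $\ell(w')<\ell(w)$, contradicting the minimality of $\ell(w)$ in $\co$.

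For $B\subseteq A$ I would prove the contrapositive: if $w\in\co$ is not of minimal length, then there exists $w'\in\co$ with $w'<w$. Using Theorem \ref{min-elt}(1), fix a reduction $w=w_0 \xrightarrow{s_1}_\d w_1 \xrightarrow{s_2}_\d \cdots \xrightarrow{s_n}_\d w_n$ with $w_n\in\co_{\min}$; since $\ell(w_n)<\ell(w)$ there is at least one strictly length-decreasing step, and I argue by induction on $n$. Each elementary step $w_{i-1}\xrightarrow{s_i}_\d w_i$ is of one of two types. If it strictly drops the length, then $\ell(w_i)=\ell(w_{i-1})-2$, which forces $s_i w_{i-1}<w_{i-1}$ and $w_i=s_i w_{i-1}\d(s_i)<s_i w_{i-1}$, whence $w_i<w_{i-1}$; if this already occurs at the first step we are done, and otherwise we pass to $w_1$ and invoke the inductive hypothesis.

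The crux is the length-preserving steps, and this is where the main obstacle lies: the relation $\to_\d$ is \emph{not} monotone for the Bruhat order on equal-length steps (for instance $st\to ts$ in $S_3$ relates two incomparable elements), so one cannot simply conclude $w_n\le w$. What I would establish instead is a transfer lemma: if $w\xrightarrow{s}_\d v$ with $\ell(v)=\ell(w)$ and $v\neq w$, and some $u\in\co$ satisfies $u<v$, then some $u'\in\co$ satisfies $u'<w$. Writing $s'=\d(s)$, a standard length/lifting argument shows that exactly one of $sw<w$, $ws'<w$ holds; say $sw<w$, so that $vs'=sw<v$. Using the lifting property of the Bruhat order I would then show: if $s'$ is an ascent of $u$, then $u\le vs'=sw<w$ and $u'=u$ works; while if $s'$ is a descent of $u$, then $us'\le vs'=sw$, and the $\d$-conjugate $u'=sus'\in\co$ is shown, again by the lifting property, to satisfy $u'<w$. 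The case $ws'<w$ is symmetric. Feeding this lemma into the induction (applied to the first, necessarily length-preserving, step when $w$ admits no direct length drop) shows that $w$ is not Bruhat-minimal, giving $B\subseteq A$.

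I expect the transfer lemma to be the only genuine work; everything else is bookkeeping around Theorem \ref{min-elt}(1). Its proof is a finite case analysis driven entirely by the lifting property, the one delicate point being the need to replace $u$ by its conjugate $sus'$ precisely when $s'$ is a right descent of $u$ — exactly the configuration in which $u$ itself need not lie below $w$.
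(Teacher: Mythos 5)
Your proof is correct, but note that the paper does not actually prove Corollary \ref{min}: it cites \cite[Corollary 4.5]{He05} and moves on. Your blind reconstruction essentially recovers the argument of that reference. The easy inclusion and the reduction via Theorem \ref{min-elt}(1) to a chain $w=w_0 \xrightarrow{s_1}_\d \cdots \xrightarrow{s_n}_\d w_n \in \co_{\min}$ are routine, and you correctly isolate the genuine difficulty: strictly length-decreasing steps are automatically Bruhat-decreasing, while length-preserving steps are not, so everything rests on your transfer lemma (if $w \xrightarrow{s}_\d v$ with $\ell(v)=\ell(w)$, $v \neq w$, and $u<v$ with $u \in \co$, then some element of $\co$ lies strictly below $w$). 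That lemma is precisely \cite[Lemma 4.4]{He05}, which this paper itself invokes later (in the proofs of the proposition on the partial order $\preceq$ and of Proposition \ref{sigma-w}), and your proof of it is the same lifting-property argument; the two-sided lifting statements you use are exactly the paper's Lemma \ref{sxy}. Two small points should be made explicit in a full write-up: (i) your claim that exactly one of $sw<w$, $w\d(s)<w$ holds in a nontrivial length-preserving step follows from the standard fact that if $sw<w$ and $w\d(s)<w$ then either $\ell(sw\d(s))=\ell(w)-2$ or $sw\d(s)=w$ (with the dual statement for double ascents), itself a short lifting argument; (ii) in the descent case the lifting property only yields $su\d(s)\le w$, and the strict inequality $u'<w$ requires the length count $\ell(su\d(s))\le \ell(u)<\ell(v)=\ell(w)$, which you should record rather than leave implicit. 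With those details filled in, your argument is a complete and self-contained proof of the corollary, which is arguably a service the paper's bare citation does not provide.
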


\subsection{}\label{min-max} One may transfer the results on minimal length elements to results on maximal length elements via the trick in \cite[\S 2.9]{GKP}. Let $w_0$ be the longest element in $W$ and $\d'=\Ad(w_0) \circ \d$ be the automorphism on $W$. Then the map $$W \to W, w \mapsto w w_0$$ reverses the Bruhat order and sends a $\d$-conjugacy class $\co$ to a $\d'$-conjugacy class $\co'$. Moreover, $w_1 \to_\d w_2$ if and only if $w_2 w_0 \to_{\d'} w_1 w_0$. Thus

\begin{thm}\label{max-elt}
Let $W$ be a finite Coxeter group and $\co$ be a $\d$-conjugacy class of $W$. Then

(1) For any $w \in \co$, there exists $w' \in \co_{\max}$ such that $w' \to_\d w$. 

(2) $\co_{\max}=\{w \in \co; w \nless w' \text{ for any } w' \in \co\}$. 
\end{thm}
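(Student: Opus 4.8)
The plan is to deduce the theorem from Theorem \ref{min-elt} and Corollary \ref{min} by applying those results to the twisted automorphism $\d'=\Ad(w_0)\circ\d$ and then transporting the conclusions back along the map $\iota\colon w\mapsto w w_0$, exactly as prepared in \S\ref{min-max}. First I would record that $\iota$ is an involution (since $w_0^2=1$) and restricts to a bijection $\co\to\co'$; because $\ell(w w_0)=\ell(w_0)-\ell(w)$, it exchanges elements of minimal length with elements of maximal length, so $\iota(\co_{\max})=\co'_{\min}$ and $\iota(\co'_{\min})=\co_{\max}$. The three properties recorded in \S\ref{min-max} --- that $\iota$ reverses the Bruhat order, carries $\d$-conjugacy classes to $\d'$-conjugacy classes, and satisfies $w_1\to_\d w_2\iff w_2 w_0\to_{\d'}w_1 w_0$ --- are all I expect to need.

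For part (1), given $w\in\co$ I would set $u=w w_0\in\co'$ and apply Theorem \ref{min-elt}(1) to the class $\co'$ for the automorphism $\d'$: there is $u'\in\co'_{\min}$ with $u\to_{\d'}u'$. Writing $u'=w'w_0$, the length remark above gives $w'\in\co_{\max}$. Finally I would translate $u\to_{\d'}u'$, i.e.\ $w w_0\to_{\d'}w'w_0$, through the equivalence of \S\ref{min-max} (read with $w_1=w'$ and $w_2=w$) to obtain $w'\to_\d w$, which is the asserted conclusion. Note that the arrow here points \emph{from} the maximal element, as it should, since the translation interchanges the roles of source and target.

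For part (2), I would apply Corollary \ref{min} to $\co'$ and $\d'$ to identify $\co'_{\min}$ with the set of Bruhat-minimal elements of $\co'$. Since $\iota$ is an order-reversing bijection $\co\to\co'$, an element $w\in\co$ is Bruhat-maximal in $\co$ if and only if $w w_0$ is Bruhat-minimal in $\co'$, i.e.\ if and only if $w w_0\in\co'_{\min}$, i.e.\ if and only if $w\in\co_{\max}$. Unwinding the meaning of ``maximal'' then yields precisely $\co_{\max}=\{w\in\co;\ w\nless w'\text{ for any }w'\in\co\}$.

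I expect no serious obstacle once the transfer apparatus of \S\ref{min-max} is in hand, since the entire mathematical content sits in Theorem \ref{min-elt} and Corollary \ref{min}. The only point demanding care is bookkeeping: keeping straight that $\iota$ reverses both the length order (so $\min\leftrightarrow\max$) and the direction of the relation $\to$ (so that the arrow in (1) runs $w'\to_\d w$ rather than $w\to_\d w'$), and confirming that $\d'$ is again an automorphism preserving $S$, which holds because $\Ad(w_0)$ permutes the simple reflections, so that Theorem \ref{min-elt} is legitimately applicable to the pair $(W,\d')$.
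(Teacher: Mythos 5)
Your proposal is correct and is essentially the paper's own argument: the paper proves Theorem \ref{max-elt} exactly by the transfer in \S\ref{min-max}, applying Theorem \ref{min-elt}(1) and Corollary \ref{min} to the class $\co'=\co w_0$ with the automorphism $\d'=\Ad(w_0)\circ\d$ and pulling back along $w\mapsto ww_0$. Your bookkeeping (reversal of the Bruhat order, $\min\leftrightarrow\max$ via $\ell(ww_0)=\ell(w_0)-\ell(w)$, and the reversal of the arrow so that $w'\to_\d w$) matches the paper's unwritten details precisely.
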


\section{Finite Hecke algebras}

In the rest of this paper, we assume that $W$ is a finite Coxeter group. 

\subsection{} Let $\qq$ be an indeterminate and $\L=\CC[\qq]$. The generic Hecke algebra (with equal parameters) $\bH$ of $W$ is the $\L$-algebra generated by $\{T_w; w \in W\}$ subject to the relations:

\begin{enumerate}
\item $T_w \cdot T_{w'}=T_{w w'}$, if $\ell(w w')=\ell(w)+\ell(w')$. 

\item $(T_s+1)(T_s-\qq)=0$ for $s \in S$. 
\end{enumerate}

Given $q \in \CC$, let $\CC_q$ be the $\L$-module where $\qq$ acts by $q$. Let $H_q=\bH\otimes_{\L} \CC_q$ be a specialization of $\bH$.  

In particular, $H_1=\CC[W]$ is the group algebra. The algebra $H_0$ is called the {\it $0$-Hecke algebra}. We will discuss it in details in the next section. 

For any $w \in W$, we denote by $T_{w, q}=T_w \otimes 1 \in H_q$. We simply write $t_w$ for $T_{w, 0}$. 

\subsection{} Let $[\bH, \bH]_\d$ be the $\d$-commutator of $\bH$, that is, the $\L$-submodule of $\bH$ spanned by $[h, h']=h h'-h' \d(h)$ for $h, h' \in \bH$. Let $\overline{\bH}_\d=\bH/[\bH, \bH]_\d$ be the $\d$-cocenter of $\bH$. 

For any $q \in \CC$, we define the $\d$-cocenter $\overline{H_{q, \d}}$ in the same way. Notice that if $q \neq 0$, then $T_{w, q}$ is invertible in $H_q$ for any $w \in W$. However, if $q=0$, then $t_w$ is invertible in $H_q$ if and only if $w=1$. This makes a difference in the study of the cocenter of $H_q$ (for $q \neq 0$) and the cocenter of $H_0$. 

\begin{prop}\label{ww'}
Let $w, w' \in W$. If $w \approx_\d w'$, then the image of $T_w$ and $T_{w'}$ in $\overline{\bH}_\d$ are the same. 
\end{prop}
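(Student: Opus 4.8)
The plan is to reduce the statement to a single elementary move and then run a three-line computation in the cocenter, the only delicacy being one case that will turn out to be empty. Write $\equiv$ for equality in $\overline{\bH}_\d$, and recall that $\d$ acts on $\bH$ by $\d(T_x)=T_{\d(x)}$, so that the defining relations of $[\bH,\bH]_\d$ give $hh'\equiv h'\d(h)$ for all $h,h'$; in particular $T_s\,h\equiv h\,T_{\d(s)}$. Since $w\approx_\d w'$ means $w\to_\d w'$ and $w'\to_\d w$, and every elementary move $\xrightarrow{s}_\d$ is length-non-increasing, the lengths along any such chain are constant. As $[\bH,\bH]_\d$ is an $\L$-submodule (so $\equiv$ is transitive), it therefore suffices to treat one move $v\xrightarrow{s}_\d v'$, i.e. $v'=sv\d(s)$ with $\ell(v')=\ell(v)$.

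First I would dispose of the two descent cases. Suppose $sv<v$. Then $T_v=T_s T_{sv}$, and since $\ell(sv\d(s))=\ell(v')=\ell(v)=\ell(sv)+1$ we have $T_{v'}=T_{sv}T_{\d(s)}$; hence $T_v=T_s T_{sv}\equiv T_{sv}T_{\d(s)}=T_{v'}$. Symmetrically, if $v\d(s)<v$, then $T_v=T_{v\d(s)}T_{\d(s)}$ while $s$ is a left ascent of $v\d(s)$, so $T_{v'}=T_s T_{v\d(s)}$; thus $T_{v'}=T_s T_{v\d(s)}\equiv T_{v\d(s)}T_{\d(s)}=T_v$. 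In both cases no quadratic relation is needed.

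The remaining case is $sv>v$ and $v\d(s)>v$, and this is the step I expect to be the real obstacle: pushing it through with the quadratic relation $(T_s+1)(T_s-\qq)=0$ seems to force a division by $\qq$ in $\overline{\bH}_\d$, whose freeness over $\L=\CC[\qq]$ is precisely the kind of structural result one is building toward, so one cannot take it for granted. The resolution I would use is that this case is vacuous. From $\ell(sv\d(s))=\ell(v)=\ell(sv)-1$ one gets $(sv)(\a_{\d(s)})<0$, while $v\,\a_{\d(s)}>0$ because $v\d(s)>v$; since $s$ carries exactly one positive root, namely $\a_s$, to a negative root, this forces $v\,\a_{\d(s)}=\a_s$, and hence $v\,\d(s)\,v\i=s_{\a_s}=s$, i.e. $sv=v\d(s)$. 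Then $v'=sv\d(s)=v$, so $T_v=T_{v'}$ trivially. The crux, which removes the apparent need to invert $\qq$, is exactly this root-theoretic identity $v\,\a_{\d(s)}=\a_s$ in the double-ascent case.
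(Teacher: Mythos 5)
Your proof is correct and takes essentially the same route as the paper: reduce to a single length-preserving elementary move $w \xrightarrow{s}_\d w'=sw\d(s)$ and observe that $T_w=T_sT_{sw}\equiv T_{sw}T_{\d(s)}=T_{w'}$ modulo $[\bH,\bH]_\d$. Your explicit handling of the double-ascent case (the root argument showing $sv=v\d(s)$, so the move is trivial) just spells out the standard Coxeter-group fact implicit in the paper's ``without loss of generality, $sw<w$'', and correctly avoids any use of the quadratic relation.
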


\begin{proof}
It suffices to prove the case where $w \xrightarrow{s}_\d w'$ and $\ell(w)=\ell(w')$. Without loss of generality, we may assume furthermore that $s w<w$. Then $T_w=T_s T_{s w}$ and $T_{w'}=T_{s w} T_{\d(s)}$. Hence the image of $T_w$ and $T_{w'}$ are the same. 
\end{proof}

For $q \neq 0$, a similar argument shows that if $w \sim_\d w'$, then the image of $T_{w, q}$ and $T_{w', q}$ in $\overline{H_{q, \d}}$ are the same. By Theorem \ref{min-elt} (2), for any $\d$-conjugacy class $\co$ of $W$, $\co_{\min}$ is a single strongly $\d$-conjugacy class. Thus 


\begin{prop}{$($\cite[\S 1]{GP93} and \cite[7.2]{GKP}$)$}
\smallskip

If $q \neq 0$, then for any $\co \in cl(W)_\d$ and $w, w' \in \co_{\min}$, the image of $T_{w, q}$ and $T_{w', q}$ in $\overline{H_{q, \d}}$ are the same. 
\end{prop}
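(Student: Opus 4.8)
The plan is to reduce the statement to the relation $\sim_\d$ and then adapt the proof of Proposition \ref{ww'}, the one essential new ingredient being the invertibility of the $T_{y,q}$ when $q \neq 0$. First, since $w, w' \in \co_{\min}$ and $\co_{\min}$ is a single strongly $\d$-conjugate class by Theorem \ref{min-elt}(2), we have $w \sim_\d w'$. By the definition of $\sim_\d$ as a finite chain of elementary strong $\d$-conjugacies, and since the image in $\overline{H_{q,\d}}$ depends only on the element, it suffices to prove the claim when $w$ and $w'$ are elementarily strongly $\d$-conjugate. So I would assume $\ell(w) = \ell(w')$ and $w' = x w \d(x)\i$ for some $x \in W$, and treat the case $\ell(xw) = \ell(x) + \ell(w)$, indicating that the case $\ell(w\d(x)\i) = \ell(x)+\ell(w)$ is entirely analogous.

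In this case I would extract two length-additive factorizations. From $\ell(xw) = \ell(x)+\ell(w)$ we get $T_{x,q} T_{w,q} = T_{xw,q}$. On the other hand, $\ell(xw\d(x)\i) = \ell(w') = \ell(w)$ together with $\ell(\d(x)) = \ell(x)$ gives $\ell(xw) = \ell(w') + \ell(\d(x))$, so that the product $w' \cdot \d(x) = xw$ is length-additive and hence $T_{w',q} T_{\d(x),q} = T_{xw,q}$. Combining the two, $T_{x,q} T_{w,q} = T_{w',q} T_{\d(x),q}$. Here the hypothesis $q \neq 0$ enters: every $T_{y,q}$ is invertible, so I may rewrite this as $T_{w',q} = T_{x,q} T_{w,q} T_{\d(x),q}\i$. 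Applying the defining relation $h h' \equiv h' \d(h)$ of the $\d$-cocenter with $h = T_{x,q}$ and $h' = T_{w,q} T_{\d(x),q}\i$, and using $\d(T_{x,q}) = T_{\d(x),q}$, yields $T_{w',q} \equiv T_{w,q} T_{\d(x),q}\i T_{\d(x),q} = T_{w,q}$, which is exactly the desired equality.

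The only genuinely new step compared with Proposition \ref{ww'}, and the one I would flag as the crux, is the appeal to invertibility of $T_{\d(x),q}$; everything else is the bookkeeping of length-additivity already used there. This is precisely the step that collapses for the $0$-Hecke algebra, where $t_x$ is invertible only when $x = 1$, which is why there the weaker chain relation $\approx_\d$, rather than $\sim_\d$, is the right equivalence, matching the distinction emphasized in \S\ref{setting}. The routine verifications left implicit above are the identity $\ell(xw\d(x)\i) = \ell(xw) - \ell(\d(x))$ and the mirror computation in the second case, both of which use only $\ell(\d(y)) = \ell(y)$ and the multiplicativity $T_{y,q} T_{z,q} = T_{yz,q}$ under addition of lengths.
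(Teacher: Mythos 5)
Your proof is correct and follows exactly the route the paper intends: reduce to a single elementary strong $\d$-conjugacy via Theorem \ref{min-elt}(2), derive $T_{x,q}T_{w,q}=T_{w',q}T_{\d(x),q}$ from the two length-additive factorizations, and use invertibility of $T_{\d(x),q}$ (valid only for $q\neq 0$) to conclude in the $\d$-cocenter. The paper compresses all of this into ``a similar argument'' following Proposition \ref{ww'}; you have simply supplied the details it leaves implicit, including the correct identification of where the argument breaks for $H_0$.
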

\begin{rmk}
We denote this image by $T_{\co, q}$. 
\end{rmk}

\begin{thm}{$($\cite[\S 1]{GP93} and \cite[Theorem 7.4 (1)]{GKP}$)$}\label{q-basis}
\smallskip

If $q \neq 0$, then $\{T_{\co, q}\}_{\co \in cl(W)_\d}$ form a basis of $\overline{H_{q, \d}}$. 
\end{thm}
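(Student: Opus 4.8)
The plan is to prove two things: that the images $\{T_{\co,q}\}_{\co \in cl(W)_\d}$ span $\overline{H_{q,\d}}$, and that they are linearly independent. \textbf{Spanning.} I would first record the basic relation in the cocenter. For any $X \in H_q$ the element $[T_{s,q},X]=T_{s,q}X-X\,\d(T_{s,q})$ lies in $[H_q,H_q]_\d$, and since $\d(S)=S$ we have $\d(T_{s,q})=T_{\d(s),q}$; hence
\[
T_{s,q}\,X \equiv X\,T_{\d(s),q} \pmod{[H_q,H_q]_\d}.
\]
Now suppose $w\xrightarrow{s}_\d w'$, i.e. $w'=swd(s)$ with $\ell(w')\le\ell(w)$, and assume $sw<w$ so that $T_{w,q}=T_{s,q}T_{sw,q}$. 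The relation gives $T_{w,q}\equiv T_{sw,q}T_{\d(s),q}$, and there are two cases according to the length of $sw\d(s)$: either $\ell(w')=\ell(w)$ and $T_{w,q}\equiv T_{w',q}$, or $\ell(w')=\ell(w)-2$ and the quadratic relation $T_{\d(s),q}^2=(q-1)T_{\d(s),q}+q$ yields
\[
T_{w,q}\equiv q\,T_{w',q}+(q-1)\,T_{sw,q},
\]
a $\CC$-linear combination of $T_{u,q}$ with $\ell(u)<\ell(w)$. The case $w<sw$ is handled symmetrically by factoring a right descent first.

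With this reduction in hand I would induct on $\ell(w)$. If $w$ has minimal length in its class $\co$, then its image is $T_{\co,q}$ by definition. Otherwise Theorem \ref{min-elt}(1) produces a chain $w\to_\d w''$ with $w''\in\co_{\min}$; since $\ell(w'')<\ell(w)$ the chain must contain a strict descent, and using the $q\neq0$ version of Proposition \ref{ww'} (valid because $\approx_\d$ implies $\sim_\d$) to pass freely along the equal-length steps, the displayed formula expresses the image of $T_{w,q}$ as a combination of images $T_{u,q}$ with $\ell(u)<\ell(w)$. By induction every $T_{w,q}$ lies in the span of $\{T_{\co,q}\}_{\co}$, so these elements span $\overline{H_{q,\d}}$ and in particular $\dim\overline{H_{q,\d}}\le |cl(W)_\d|$.

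\textbf{Linear independence.} For the reverse inequality I would dualize: a linear functional on $\overline{H_{q,\d}}$ is precisely a $\d$-twisted trace $\phi\colon H_q\to\CC$ satisfying $\phi(hh')=\phi(h'\d(h))$, so it suffices to produce $|cl(W)_\d|$ such traces whose matrix of values on $\{T_{\co,q}\}$ is nonsingular. For generic $q$ the algebra $H_q$ is split semisimple, and I would take the $\d$-twisted irreducible characters as the functionals; the object to control is then the "character table" matrix $\bigl(\chi(T_{\co,q})\bigr)$. The key observation is that, up to the integral reduction in $\bH$, this matrix specializes at $q=1$ to the character table of the $\d$-twisted classes of $\CC[W]$ (where $T_{\co,1}$ is literally the image of a twisted class sum), which is nonsingular by classical representation theory. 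Its determinant is therefore a nonzero element of $\CC[\qq]$, hence nonzero for all but finitely many $q$; to extend to every $q\neq0$ I would invoke that the class polynomials $f_{w,\co}$ governing the reduction above are independent of the specialization (they arise from the reduction in $\bH$), so the rank of the spanning map is constant on $q\neq0$.

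The hard part is the linear independence, and specifically the nonsingularity of the character-table determinant, i.e. knowing that the $\d$-twisted traces on $H_q$ separate the $T_{\co,q}$. Reducing this to the $q=1$ specialization, where it becomes the classical statement that $\d$-class functions on $W$ are detected by $\d$-twisted characters and number exactly $|cl(W)_\d|$, is the crux of the argument; the subtlety to watch is that $H_q$ may fail to be semisimple at finitely many $q\neq0$, which is exactly why the constancy of the class polynomials $f_{w,\co}$ is needed to transport the generic conclusion to all nonzero parameters. Everything else is the bookkeeping of the length induction.
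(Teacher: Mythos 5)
The paper itself offers no proof of this theorem --- it is quoted from \cite{GP93} and \cite{GKP} --- so your attempt can only be measured against the standard argument in those references. Your spanning half is correct and is exactly the standard one: the twisted-commutation relation $T_{s,q}X \equiv X T_{\d(s),q}$, the two-case reduction $T_{w,q}\equiv T_{w',q}$ or $T_{w,q}\equiv qT_{w',q}+(q-1)T_{sw,q}$, and induction on length via Theorem \ref{min-elt}(1), with well-definedness of $T_{\co,q}$ from Theorem \ref{min-elt}(2). The quadratic-relation computation checks out.

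The genuine gap is in your transport of linear independence from generic $q$ to every $q\neq 0$. You invoke ``the constancy of the class polynomials $f_{w,\co}$'' to claim the rank of the spanning map is constant on $q\neq 0$, but this does not do the job: the identities $T_{w,q}\equiv \sum_\co f_{w,\co}T_{\co,q}$ remain perfectly valid even if the $T_{\co,q_0}$ satisfy an extra linear relation at some special $q_0$, so no contradiction arises; and the \emph{uniqueness} of the $f_{w,\co}$ (Proposition \ref{class}) is logically equivalent to the independence you are trying to prove, so appealing to it is circular. What actually closes the gap is one of the following. (i) Semicontinuity: $[H_q,H_q]_\d$ is the span of the $|W|^2$ vectors $T_{x,q}T_{y,q}-T_{y,q}T_{\d(x),q}$, whose coordinates in the basis $\{T_{w,q}\}$ are polynomials in $q$; hence $\dim [H_{q_0},H_{q_0}]_\d$ is at most the generic value $|W|-\sharp cl(W)_\d$, so $\dim \overline{H_{q_0,\d}} \ge \sharp cl(W)_\d$ for \emph{every} $q_0$, and your spanning bound forces equality. (ii) The route of \cite{GKP}: prove the cocenter over $\CC[\qq,\qq\i]$ is free with basis $\{T_\co\}$ (independence over the fraction field suffices, e.g.\ via your character argument or the $q=1$ specialization, where the $\d$-cocenter of $\CC[W]$ visibly has dimension $\sharp cl(W)_\d$), then use that the cocenter commutes with base change, so a basis of the free module specializes to a basis at each $q_0\neq 0$. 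Either repair is short, but as written your final step does not rule out degeneration at the finitely many non-semisimple parameters, which is precisely the case the theorem's hypothesis $q\neq 0$ is meant to include.
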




\begin{prop}{$($\cite[\S 1.2]{GP93} and \cite[Theorem 7.4 (2)]{GKP}$)$}\label{class}
\smallskip

If $q \neq 0$, then there exists a unique polynomial $f_{w, \co} \in \ZZ[q]$ for any $w \in W$ and $\co \in cl(W)_\d$ such that the image of $T_w$ in $\overline{H_{q, \d}}$ equals $\sum_{\co \in cl(W)_\d} f_{w, \co} T_{\co, q}$. 
\end{prop}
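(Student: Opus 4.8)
The plan is to prove existence constructively by induction on $\ell(w)$, repeatedly pushing $T_w$ modulo the $\d$-commutator toward combinations of shorter elements until only minimal length class representatives survive, and then to deduce uniqueness formally from Theorem \ref{q-basis}. Throughout I write $\equiv$ for equality of images in $\overline{H_{q,\d}}$. For the base case, if $w$ is of minimal length in its class $\co$, then by definition $T_w \equiv T_{\co,q}$, so I set $f_{w,\co}=1$ and $f_{w,\co'}=0$ for $\co'\neq\co$, all of which lie in $\ZZ[q]$.

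For the inductive step I assume $w$ is not of minimal length in its class $\co$. By Theorem \ref{min-elt}(1) there is a reduction $w=w_0\xrightarrow{s_1}_\d w_1\xrightarrow{s_2}_\d\cdots\xrightarrow{s_n}_\d w_n$ with $w_n\in\co_{\min}$ and $\ell(w_n)<\ell(w)$. Let $k$ be the first index where the length strictly drops: then $w_0,\dots,w_{k-1}$ all have length $\ell(w)$, so $w\approx_\d w_{k-1}$, while $\ell(w_k)=\ell(w)-2$. By Proposition \ref{ww'} the images $T_w$ and $T_{w_{k-1}}$ agree, so it suffices to reduce $T_{\tilde w}$ for $\tilde w:=w_{k-1}$. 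The strict drop $\ell(w'')=\ell(\tilde w)-2$ with $w'':=w_k=s\tilde w\d(s)$ and $s:=s_k$ forces $s\tilde w<\tilde w$. I would then use $T_{\tilde w}=T_sT_{s\tilde w}$ with the $\d$-commutator relation to get $T_{\tilde w}\equiv T_{s\tilde w}T_{\d(s)}$, substitute $T_{s\tilde w}=T_{w''}T_{\d(s)}$, and apply the quadratic relation $T_{\d(s)}^2=(\qq-1)T_{\d(s)}+\qq$ to arrive at
\[
T_w \ \equiv\ T_{\tilde w}\ \equiv\ (\qq-1)\,T_{s\tilde w}+\qq\,T_{w''}.
\]
Since $\ell(s\tilde w)=\ell(w)-1$ and $\ell(w'')=\ell(w)-2$ are both strictly smaller than $\ell(w)$, the inductive hypothesis expresses $T_{s\tilde w}$ and $T_{w''}$ as $\ZZ[q]$-combinations of the $T_{\co',q}$; as $\qq-1,\qq\in\ZZ[q]$ and $\ZZ[q]$ is a ring, the same follows for $T_w$. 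This gives existence.

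Uniqueness is then immediate from Theorem \ref{q-basis}: since $\{T_{\co,q}\}_{\co\in cl(W)_\d}$ is a basis of $\overline{H_{q,\d}}$, for each fixed $q\neq0$ the scalar coefficients in any such expansion are uniquely determined, and two polynomials in $\ZZ[q]$ that realize these scalars for all $q\neq0$ agree at infinitely many points and hence coincide as elements of $\ZZ[q]$.

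The main obstacle I anticipate is that the relation $\to_\d$ interleaves length-preserving and length-decreasing elementary moves, and the length-preserving ones do not decrease the induction parameter $\ell(w)$, so a naive induction could stall or cycle. This is exactly what dictates the two-part structure above: Proposition \ref{ww'} collapses the entire length-preserving prefix of the reduction into the single identification $T_w\equiv T_{\tilde w}$, while Theorem \ref{min-elt}(1) guarantees that a path terminating at strictly smaller length exists, so that isolating its \emph{first} strict descent always produces a genuine drop in length to feed the induction and to keep the emerging coefficients inside $\ZZ[q]$.
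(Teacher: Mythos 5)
Your proof is correct, and since the paper itself gives no proof of Proposition \ref{class} (it is quoted from \cite[\S 1.2]{GP93} and \cite[Theorem 7.4 (2)]{GKP}), the right comparison is with those sources: your argument is essentially the standard one there, namely induction on $\ell(w)$ using Theorem \ref{min-elt}(1), collapsing the length-preserving prefix of a reduction path via $\approx_\d$ (Proposition \ref{ww'}), isolating the first strict descent to get $T_w \equiv (\qq-1)T_{s\tilde w}+\qq T_{w''}$ from the quadratic relation, and deducing uniqueness of the polynomials from Theorem \ref{q-basis} together with agreement at infinitely many values of $q$. All the individual steps check out, including the parity fact that a strict drop in a single step is a drop by exactly $2$, which is what forces $s\tilde w<\tilde w$ and $s\tilde w\d(s)<s\tilde w$ and makes the factorizations $T_{\tilde w}=T_sT_{s\tilde w}$ and $T_{s\tilde w}=T_{w''}T_{\d(s)}$ legitimate.
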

\begin{rmk}
The polynomials $f_{w, \co}$ are called the {\it class polynomials}. They play an important role in the study of characters of Hecke algebras. 
\end{rmk}

\begin{thm}{$($\cite[Theorem 5.2]{GR}$)$}
\smallskip

Let $q \neq 0$. Let $$Z(H_q)_\d=\{h \in H_q; h' h=h \d(h') \text{ for any } h' \in H_q\}$$ be the $\d$-center of $H_q$. For any $\co \in cl(W)_\d$, set $$z_\co=\sum_{w \in W} q^{-\ell(w)} f_{w, \co} T_{w \i}.$$ Then $\{z_\co\}_{\co \in cl(W)_\d}$ form a basis of $Z(H_q)_\d$. 
\end{thm}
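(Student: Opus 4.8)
The plan is to exploit the fact that $H_q$ is a symmetric algebra and that the symmetrizing form sets up a perfect duality between the $\d$-center and the $\d$-cocenter, so that the asserted basis of the center appears as the basis dual to $\{T_{\co,q}\}$.

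First I would recall the symmetrizing trace $\tau\colon H_q\to\CC$, determined by $\tau(T_1)=1$ and $\tau(T_w)=0$ for $w\neq 1$. A direct computation with the quadratic relations gives $\tau(T_aT_b)=q^{\ell(a)}$ if $b=a\i$ and $0$ otherwise; since $q\neq0$ this shows that $\{T_w\}_{w\in W}$ and $\{q^{-\ell(w)}T_{w\i}\}_{w\in W}$ are dual bases for the nondegenerate form $(h,h')\mapsto\tau(hh')$. I would also record the compatibility $\tau\circ\d=\tau$, which holds because $\d$ permutes the $T_w$ preserving length and fixes $T_1$.

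Second, I would set up the center--cocenter duality. For a symmetric algebra the orthogonal complement of the commutator space under $\tau$ is exactly the center; in the $\d$-twisted setting, using $\tau\circ\d=\tau$ together with the length-preserving anti-automorphism $\iota\colon T_w\mapsto T_{w\i}$, one checks that $z\in Z(H_q)_\d$ if and only if the functional $h\mapsto\tau(zh)$ vanishes on $[H_q,H_q]_\d$. Thus the form induces a perfect pairing $Z(H_q)_\d\times\overline{H_{q,\d}}\to\CC$, so by Theorem \ref{q-basis} we obtain $\dim Z(H_q)_\d=\dim\overline{H_{q,\d}}=|cl(W)_\d|$. This reduces the theorem to producing $|cl(W)_\d|$ linearly independent central elements.

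Third, I would identify the $z_\co$ as the basis dual to $\{T_{\co,q}\}$ under this pairing. Using the dual basis from the first step, $\tau(z_\co T_w)=\sum_v q^{-\ell(v)}f_{v,\co}\,\tau(T_{v\i}T_w)=f_{w,\co}$. By Proposition \ref{class}, the map $w\mapsto f_{w,\co}$ is precisely the $\co$-coordinate of the image of $T_w$ in $\overline{H_{q,\d}}$ relative to $\{T_{\co',q}\}$; hence the functional $\tau(z_\co\,\cdot)$ factors through $\overline{H_{q,\d}}$, which by the second step means $z_\co\in Z(H_q)_\d$. Moreover these are exactly the coordinate functionals dual to the cocenter basis, so they are linearly independent, whence the $z_\co$ are linearly independent. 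Combined with the dimension count, $\{z_\co\}_{\co\in cl(W)_\d}$ is a basis of $Z(H_q)_\d$.

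The main obstacle is the bookkeeping of the $\d$-twist: one must verify that the orthogonal complement of $[H_q,H_q]_\d$ under $\tau$ is the $\d$-center defined by the relation $h'h=h\d(h')$, rather than a $\d\i$-variant. This is where $\tau\circ\d=\tau$ and the anti-automorphism $\iota$ (mirrored in the $T_{w\i}$ appearing in $z_\co$) are essential, and it is the only place the precise twisting conventions enter. A minor technical point is that every step requires $q\neq0$, since both the dual basis and the coefficients $q^{-\ell(w)}$ are only defined after $q$ is invertible.
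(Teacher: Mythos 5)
Note first that the paper does not prove this theorem: it is quoted from \cite[Theorem 5.2]{GR}, so your proposal must be judged against the source's argument, and in fact it reproduces it. The symmetrizing trace $\tau$ with $\tau(T_xT_y)=q^{\ell(x)}$ for $y=x^{-1}$ and $0$ otherwise, the dual bases $\{T_w\}$ and $\{q^{-\ell(w)}T_{w^{-1}}\}$ (valid precisely because $q\neq 0$), the identification of the center with the orthogonal complement of the commutator space, the computation $\tau(z_\co T_w)=f_{w,\co}$ exhibiting the $z_\co$ as the functionals dual to the cocenter basis $\{T_{\co,q}\}$ of Theorem \ref{q-basis}, and the dimension count $\dim Z(H_q)_\d=\dim\overline{H_{q,\d}}=\sharp\, cl(W)_\d$ --- this is exactly the Geck--Rouquier proof (see also \cite[\S 8.2]{GP00}), and each of these steps is correct as you state it, including the appeal to Proposition \ref{class} to see that $\tau(z_\co\,\cdot)$ factors through the cocenter.

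The one step you defer --- ``one checks that $z\in Z(H_q)_\d$ iff $\tau(z\,\cdot)$ vanishes on $[H_q,H_q]_\d$'' --- is exactly where the twist bites, and it resolves in the direction opposite to your claim. Cyclicity of $\tau$ gives $\tau\bigl(z(hh'-h'\d(h))\bigr)=\tau\bigl(h'\,(zh-\d(h)z)\bigr)$, so by nondegeneracy $z\perp[H_q,H_q]_\d$ if and only if $zh=\d(h)z$ for all $h$, i.e.\ $hz=z\d^{-1}(h)$: the orthogonal complement is the $\d^{-1}$-center in the paper's convention, not the $\d$-center. When $\d^2=\mathrm{id}$ --- in particular $\d=\mathrm{id}$, the actual scope of \cite[Theorem 5.2]{GR} --- the two coincide and your proof is complete. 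When $\d$ has order $3$ ($D_4$ triality, or a reducible $W$ with cyclic $\d$) they genuinely differ, and the element as written really does land on the wrong side: specializing to $q=1$, where $f_{w,\co}$ becomes the indicator function of $\co$, the element $z_\co$ becomes the class sum over $\co^{-1}=\{w^{-1}:w\in\co\}$, which is a $\d^{-1}$-twisted class and hence $\d^{-1}$-central but in general not $\d$-central. So your argument, made precise, shows $\{z_\co\}$ is a basis of $\{z:zh=\d(h)z \text{ for all } h\}=Z(H_q)_{\d^{-1}}$; the anti-automorphism $\iota$ cannot rescue the displayed element, although applying it, i.e.\ taking $\iota(z_\co)=\sum_w q^{-\ell(w)}f_{w,\co}T_w$, does produce an element of $Z(H_q)_\d$, since $\iota\circ\d=\d\circ\iota$ and $\iota$ is an anti-automorphism. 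This is a convention mismatch confined to non-involutive $\d$ (arguably present in the statement itself) rather than a flaw in your architecture, but the equivalence in your second step is false as stated and should be corrected to the $\d^{-1}$-center, with the involutive case singled out.
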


\

As a consequence of the results above, we have

\begin{cor}
If $q \neq 0$, then $$\dim Z(H_q)_\d=\dim \overline{H_{q, \d}}=\sharp cl(W)_\d.$$
\end{cor}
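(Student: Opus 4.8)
The plan is to read off both dimension equalities directly from the two basis theorems established immediately above, since each produces a basis indexed by the same set $cl(W)_\d$. First I would invoke Theorem \ref{q-basis}: for $q \neq 0$ the elements $\{T_{\co, q}\}_{\co \in cl(W)_\d}$ form a basis of the $\d$-cocenter $\overline{H_{q, \d}}$, so that the cardinality of this basis gives
$$\dim \overline{H_{q, \d}} = \sharp cl(W)_\d.$$
Next I would invoke the Geck--Rouquier theorem cited just before: for $q \neq 0$ the elements $\{z_\co\}_{\co \in cl(W)_\d}$ form a basis of the $\d$-center $Z(H_q)_\d$, and again counting basis elements gives
$$\dim Z(H_q)_\d = \sharp cl(W)_\d.$$
Combining the two displays yields the chain of equalities $\dim Z(H_q)_\d = \dim \overline{H_{q, \d}} = \sharp cl(W)_\d$, which is exactly the assertion.

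I do not expect a genuine obstacle here, because the corollary is purely formal given the two preceding results: the only observation required is that both basis sets are naturally parametrized by $cl(W)_\d$, so the equalities follow by matching indexing sets. All of the substantive content — the existence of the cocenter basis via minimal length elements and the class polynomials, and the explicit construction of the central elements $z_\co$ from those polynomials — has already been carried out in Theorem \ref{q-basis}, Proposition \ref{class}, and the Geck--Rouquier theorem. Thus the proof amounts to citing these statements and remarking that their bases have the common cardinality $\sharp cl(W)_\d$.
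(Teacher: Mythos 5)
Your proposal is correct and matches the paper exactly: the corollary is stated there as an immediate consequence of Theorem \ref{q-basis} and the Geck--Rouquier theorem, both of which produce bases indexed by $cl(W)_\d$, which is precisely your argument.
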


\section{Parameterizations of $W_{\d, \min}/\approx_\d$ and $W_{\d, \max}/\approx_\d$}

\subsection{} Notice that for $q \neq 0$, both $\overline{H_{q, \d}}$ and $Z(H_q)_\d$ have basis sets indexed by $cl(W)_\d$, which is in natural bijection with $W_{\d, \min}/\sim_\d$. As we will see later in this paper, for $\overline{H_{0, \d}}$ and $Z(H_0)_\d$, we need to use $W_{\d, \min}/\approx_\d$ and $W_{\d, \max}/\approx_\d$ instead. We give parameterizations of these sets here. 

\subsection{}\label{g-conj} Let $\G_\d=\{(J, C); J=\d(J) \subset S, C \in cl(W_J)_\d \text{ is elliptic}\}$. There is a natural map $$f: \G_\d \to cl(W)_\d, \qquad (J, C) \mapsto \co,$$ where $\co$ is the unique $\d$-conjugacy class of $W$ that contains $C$. 

We say that $(J, C)$ is equivalent to $(J', C')$ if there exists $x \in W^\d$ and the conjugation by $x$ sends $J$ to $J'$ and sends $C$ to $C'$. By \cite[Proposition 5.2.1]{CH}, $f$ induces a bijection from the equivalence classes of $\G_\d$ to $cl(W)_\d$. 

\begin{prop}\label{g-j}
Let $\co \in cl(W)_\d$. Then $$\co_{\min}=\sqcup_{(J, C) \in \G_\d \text{ with } f(J, C)=\co} C_{\min}.$$ 
\end{prop}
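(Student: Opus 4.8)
The plan is to read off the decomposition as the fibres of the assignment $w\mapsto(\supp_\d(w),C_w)$ on $\co_{\min}$, where $C_w$ denotes the $W_{\supp_\d(w)}$-$\d$-conjugacy class of $w$. I will prove the two inclusions separately; disjointness is then automatic, since for an elliptic pair $(J,C)\in\G_\d$ every element of $C$ has full $\d$-support $J$, so from $v\in C_{\min}$ one recovers $J=\supp_\d(v)$ and $C=C_v$. I use freely that lengths in $W_J$ and in $W$ agree on $W_J$ (so $C_{\min}$ is unambiguous) and the Bruhat description of $\co_{\min}$ from Corollary \ref{min}.

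The technical engine is a support lemma: if $w\xrightarrow{s}_\d w'$ with $w'\neq w$, then $s\in\supp_\d(w)$, whence $\supp(w')\subseteq\supp(w)\cup\{s,\d(s)\}$ and $\supp_\d$ is non-increasing along $\to_\d$. When $sw<w$ this holds since $s\in\supp(w)$; when $sw>w$, a short computation with the root action shows that $\ell(sw\d(s))\le\ell(w)$ forces $w'=w$ unless $s\in\supp(w)$. I would establish this first, as both inclusions rest on it.

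For $\supseteq$, let $(J,C)\in\G_\d$ with $f(J,C)=\co$ and $v\in C_{\min}$. By Theorem \ref{min-elt}(1) there is a chain $v\to_\d w_\ast$ with $w_\ast\in\co_{\min}$. Since $v\in W_J$ and $J=\d(J)$, the support lemma shows each non-trivial step conjugates by an $s\in\supp_\d(\,\cdot\,)\subseteq J$, so the chain stays inside $W_J$ and $w_\ast\in C$. Then $\ell(w_\ast)\le\ell(v)=\min_C\ell\le\ell(w_\ast)$, so $\ell(v)=\min_\co\ell$ and $v\in\co_{\min}$; note this uses only $v\in W_J$, not ellipticity.

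For $\subseteq$, fix $w\in\co_{\min}$ and put $J=\supp_\d(w)$, $C=C_w$; then $f(J,C)=\co$, and $w\in C_{\min}$ because $w$ is minimal in the larger set $\co$. It remains to show $C$ is elliptic in $W_J$, and this is the heart of the matter. I would argue by induction on $|S|$: if $J\subsetneq S$, then $w$ is a minimal length element of its $W_J$-class, and the inductive hypothesis inside $W_J$ gives that $C$, the $W_{\supp_\d(w)}$-class, is elliptic. The main obstacle is the base case $J=S$, namely that a full-$\d$-support $w\in\co_{\min}$ forces $\co$ to be elliptic. The difficulty is that $\co_{\min}$ is only a single \emph{strong} $\d$-class (Theorem \ref{min-elt}(2)), and strong $\d$-conjugacy does not preserve $\supp_\d$ (already $s_1\sim_\d s_2$ in type $A_2$, with different supports), so one cannot propagate ``full support'' across $\co_{\min}$: if $\co$ were non-elliptic, reducing an element of $\co\cap W_{J_0}$ (with $J_0=\d(J_0)\subsetneq S$) via Theorem \ref{min-elt}(1) and the support lemma yields $u'\in\co_{\min}$ with $\supp_\d(u')\subsetneq S$, and extracting a contradiction with $w$ requires the finer input that a minimal length element is elliptic in the parabolic generated by its support. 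I would import this from the minimal length element theory behind Theorem \ref{min-elt} (\cite{GP93,He05,HN12}); granting it, the induction closes and the two inclusions together with disjointness give the set equality. Moreover, combining the $\approx_\d$-invariance of $\supp_\d$ with Theorem \ref{min-elt}(3) applied inside $W_J$ identifies each fibre $C_{\min}$ with a single $\approx_\d$-class of $\co_{\min}$.
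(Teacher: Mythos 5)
Your proposal is correct, and its skeleton matches the paper's: decompose $\co_{\min}$ along $w \mapsto (\supp_\d(w), \text{class of } w \text{ in } W_{\supp_\d(w)})$, prove the two inclusions, and get disjointness from the fact that elliptic classes have full $\d$-support. The difference lies in how the inclusions are discharged. For $\supseteq$, the paper simply cites \cite[Lemma 7.3]{He05} for $C_{\min}\subseteq\co_{\min}$, whereas you rederive it from Theorem \ref{min-elt}(1) together with your support lemma (a nontrivial step $w\xrightarrow{s}_\d w'$ forces $s\in\supp_\d(w)$, so a length-reducing chain starting in $W_J$ with $J=\d(J)$ stays in $W_J$, and comparing lengths at both ends pins $v\in\co_{\min}$); this is a sound, self-contained replacement for that citation, and your remark that ellipticity is not needed for this half is accurate. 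For $\subseteq$, you correctly isolate the irreducible hard input --- that a minimal length element $w$ is elliptic in the parabolic $W_{\supp_\d(w)}$ --- and import it from the minimal-length-element literature; this is exactly \cite[Theorem 7.5 (P1)]{He05}, which is precisely what the paper cites at the same point (a fact whose known proofs are case-by-case, as the paper's footnote to Theorem \ref{fuse} notes, so you were right not to attempt it). One structural comment: once you grant that fact, your induction on $|S|$ is superfluous --- the statement applies directly to $w\in\co_{\min}$ with $J=\supp_\d(w)$, which is how the paper concludes, and your induction carries no content beyond its base case, since passing to $W_J$ merely makes the support full. Net effect: the same theorem-level dependencies as the paper, with one citation traded for a short direct argument.
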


\begin{proof}
If $(J, C) \in \G_\d$ with $f(J, C)=\co$, we have $C_{\min} \subset \co_{\min}$ by \cite[Lemma 7.3]{He05}. 

Let $w \in \co_{\min}$. Let $J=\supp_\d(w)$ and $C \in cl(W_J)_\d$  with $w \in C$. By \cite[Theorem 7.5 (P1)]{He05}, $C$ is an elliptic $\d$-conjugacy class of $W_J$. Since $w \in \co_{\min}$ and $w \in C$, $w \in C_{\min}$. 
\end{proof}

\begin{cor}\label{fff}
The map $$f: \G_\d \to W_{\d, \min}/\approx_\d, \qquad (J, C) \mapsto C_{\min}$$ is a bijection. 
\end{cor}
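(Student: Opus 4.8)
The plan is to read everything off the decomposition in Proposition \ref{g-j}, the only genuinely new input being that distinct pieces $C_{\min}$ give distinct $\approx_\d$-classes. First I would check that the assignment $(J,C) \mapsto C_{\min}$ is well defined as a map into $W_{\d,\min}/\approx_\d$. By Proposition \ref{g-j}, each $C_{\min}$ is contained in $\co_{\min} \subseteq W_{\d,\min}$, where $\co$ is the $\d$-conjugacy class of $W$ containing $C$. Applying Theorem \ref{min-elt}(3) to the elliptic $\d$-conjugacy class $C$ of the parabolic subgroup $W_J$ (legitimate since $\d(J)=J$, so $\d$ restricts to an automorphism of $W_J$), $C_{\min}$ is a single $\approx_\d$-equivalence class \emph{inside} $W_J$. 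Since lengths in $W_J$ agree with lengths in $W$, an $\approx_\d$-chain of elementary steps inside $W_J$ is a fortiori such a chain in $W$; hence $C_{\min}$ lies in a single $\approx_\d$-class of $W$, and $f$ indeed determines an element of $W_{\d,\min}/\approx_\d$.

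Surjectivity is then immediate from Proposition \ref{g-j}: given an $\approx_\d$-class $\Theta \subseteq W_{\d,\min}$, it lies in $\co_{\min}$ for the unique $\co \in cl(W)_\d$ it meets, and any $w \in \Theta$ lies in exactly one piece $C_{\min}$ of the decomposition of $\co_{\min}$; then $f(J,C)$ is the $\approx_\d$-class of $C_{\min}$, which contains $w$ and hence equals $\Theta$.

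The substance is injectivity, which reduces to the claim: if $w,w' \in W_{\d,\min}$ satisfy $w \approx_\d w'$, then $\supp_\d(w)=\supp_\d(w')=:J$ and $w,w'$ are $\d$-conjugate in $W_J$. Granting this, if $f(J,C)=f(J',C')$ I pick $w \in C_{\min}$ and $w' \in C'_{\min}$, which are then $\approx_\d$-equivalent; ellipticity of $C$ and $C'$ gives $J=\supp_\d(w)$ and $J'=\supp_\d(w')$, so $J=J'$, and $\d$-conjugacy in $W_J$ forces $C=C'$. To prove the claim I would use the fact, already implicit in the proof of Proposition \ref{ww'}, that $w \approx_\d w'$ can be realized by a chain of \emph{length-preserving} elementary steps $w_0 \xrightarrow{s}_\d w_1$. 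It then suffices to analyze a single nontrivial such step with $w_0 \in W_J$ and $\supp(w_0) \subseteq J$. If $s \notin J$ then $s,\d(s) \notin \supp(w_0)$, whence $sw_0>w_0$ and $w_0\d(s)>w_0$; writing $w_1=(sw_0)\d(s)$, length preservation forces $\d(s)$ to be a right descent of $sw_0$, i.e. $\d(s)\in\supp(sw_0)=\{s\}\cup\supp(w_0)$, which is impossible when $\d(s)\neq s$, while for $\d(s)=s$ the standard Coxeter fact ``$sv>v$ and $vs>v$ imply $\ell(svs)=\ell(v)+2$ or $svs=v$'' forces $w_1=w_0$, against nontriviality. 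Hence every nontrivial step uses $s \in J$ and keeps us inside $W_J$; the whole chain lies in $W_J$, exhibiting $w,w'$ as $\d$-conjugate in $W_J$, and in particular $\supp_\d(w)=\supp_\d(w')$.

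The main obstacle is precisely this last step — ruling out that a length-preserving $\d$-conjugation by some $s \notin J$ could carry $w_0$ to a genuinely different element of $W_J$ — which is exactly where the two-sided length lemma for Coxeter groups is essential. A secondary point to handle with care is the reduction of $\approx_\d$ to length-preserving chains; I would treat it as a known property of the relation $\approx_\d$, consistent with its use in the proof of Proposition \ref{ww'}.
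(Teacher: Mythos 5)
Your proposal is correct and follows essentially the same route as the paper: surjectivity read off from Proposition \ref{g-j}, well-definedness from Theorem \ref{min-elt}(3) applied to $C$ inside $W_J$, and a single-step analysis of $w_0 \xrightarrow{s}_\d w_1$ combining the descent-lies-in-support observation with the two-sided length lemma --- your version of that step is just the contrapositive of the paper's ``$w'=w$ or $sw<w$ or $w\d(s)<w$, and in the latter two cases $s \in J$.'' The only difference is one of bookkeeping: the paper shows $C_{\min}$ is closed under $\to_\d$ and hence equals a full $\approx_\d$-class, making injectivity ``obvious,'' whereas you show chains between minimal length elements stay inside $W_{\supp_\d(w)}$ and then spell out the injectivity via ellipticity, which is a legitimate and slightly more detailed rendering of the same argument.
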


\begin{proof}
Let $(J, C) \in \G_\d$ and $w \in C_{\min}$. If $w \xrightarrow{s}_\d w'$, then $w'=w$ or $s w<w$ or $w \d(s)<w$. In the latter two cases, $s \in J$. Therefore $w' \in C$. Since $w \in C_{\min}$ and $\ell(w') \le \ell(w)$, $w' \in C_{\min}$. 

By definition of $\approx_\d$, $v \in C_{\min}$ for any $v \in W$ with $w \approx_\d v$. On the other hand, by Theorem \ref{min-elt}, $C_{\min}$ is a single $\approx_\d$-equivalence class. Hence the map $(J, C) \mapsto C_{\min} \in W_{\d, \min}/\approx_\d$ is well-defined. 

It is obvious that this map is injective. The surjectivity follows from Proposition \ref{g-j}. 
\end{proof}

\

Using the argument in \S \ref{min-max}, we also obtain

\begin{cor}\label{ff}
Set $\d'=\Ad(w_0) \circ \d$. The map $$\G_{\d'} \to W_{\d, \max}/\approx_\d, \qquad (J, C) \mapsto C_{\min} w_0$$ is a bijection. 
\end{cor}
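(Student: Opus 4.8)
The plan is to reduce the statement to Corollary~\ref{fff} applied to the twisted automorphism $\d'$, and then transport the resulting bijection through the order-reversing involution $w \mapsto w w_0$ of \S\ref{min-max}. Concretely, I would factor the asserted map as $(J,C) \mapsto C_{\min} \mapsto C_{\min} w_0$, where the first arrow is the bijection of Corollary~\ref{fff} for $\d'$ and the second is induced by right multiplication by $w_0$.

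For the first arrow, note that since $w_0^2 = 1$ the map $\Ad(w_0)$ is an involution, so $\d'$ is genuinely an automorphism of $W$ stabilizing $S$, and Corollary~\ref{fff} applies verbatim with $\d'$ in place of $\d$: it gives a bijection $\G_{\d'} \to W_{\d', \min}/\approx_{\d'}$, $(J,C) \mapsto C_{\min}$, where $C_{\min}$ denotes the minimal length elements of the elliptic $\d'$-conjugacy class $C$ of $W_J$, which form a single $\approx_{\d'}$-class. For the second arrow I would first check it on elements: applying the construction of \S\ref{min-max} with $\d'$ in place of $\d$ — whose associated twist is $\Ad(w_0)\circ\d' = \d$ — shows that $w \mapsto w w_0$ carries each $\d'$-conjugacy class to a $\d$-conjugacy class. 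Since $\ell(w w_0) = \ell(w_0) - \ell(w)$, this map reverses the length order within a class and hence sends minimal length elements to maximal length ones; consequently $W_{\d', \min}\, w_0 = W_{\d, \max}$, and because $w \mapsto w w_0$ is a self-inverse bijection of $W$ it restricts to a bijection $W_{\d', \min} \to W_{\d, \max}$.

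It remains to verify that this bijection is compatible with the equivalence relations. Here I would invoke the statement from \S\ref{min-max}, again read with $\d'$ in place of $\d$, that $w_1 \to_{\d'} w_2$ if and only if $w_2 w_0 \to_\d w_1 w_0$; applying this in both directions yields $w_1 \approx_{\d'} w_2 \iff w_1 w_0 \approx_\d w_2 w_0$, so $w \mapsto w w_0$ descends to a bijection $W_{\d', \min}/\approx_{\d'} \to W_{\d, \max}/\approx_\d$. Composing the two arrows gives exactly the map $(J,C) \mapsto C_{\min} w_0$ of the statement. The only genuinely delicate point is the bookkeeping of the ``double twist'': one must confirm that feeding $\d'$ into the \S\ref{min-max} machine returns $\d$ itself rather than a third automorphism, which is precisely where $w_0^2 = 1$ enters, and that $C_{\min} w_0$ is a well-defined single $\approx_\d$-class — a consequence of $C_{\min}$ being a single $\approx_{\d'}$-class via Corollary~\ref{fff}. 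Everything else is the formal composition of two bijections.
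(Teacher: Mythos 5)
Your proof is correct and is exactly the paper's argument: the paper derives Corollary \ref{ff} by the same route, namely applying Corollary \ref{fff} with $\d'$ in place of $\d$ and transporting the result through the involution $w \mapsto w w_0$ of \S\ref{min-max} (using $\Ad(w_0)\circ\d'=\d$ and the equivalence $w_1 \to_{\d'} w_2 \iff w_2 w_0 \to_\d w_1 w_0$). You have merely spelled out the bookkeeping — the double-twist computation, $\ell(w w_0)=\ell(w_0)-\ell(w)$, and the descent to $\approx$-classes — that the paper leaves implicit in the phrase ``using the argument in \S\ref{min-max}.''
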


\begin{example}\label{eg}
Let $W=S_3$. Then $\sharp cl(W)=3$, $\sharp \G=4$ and $\sharp \G_{\Ad(w_0)}=3$. Therefore $\sharp(W_{\min}/\approx) \neq \sharp cl(W)$ and $\sharp(W_{\min}/\approx) \neq \sharp(W_{\max}/\approx)$ for $W=S_3$. 
\end{example}

\section{Centers of $0$-Hecke algebras}

\subsection{} Let $\Sigma \in W_{\d, \max}/\approx_\d$. Set 
\begin{gather*} 
W_{\le \Sigma}=\{x \in W; x \le w \text{ for some } w\in \Sigma\}, \\
t_{\le \Sigma}=\sum_{x \in W_{\le \Sigma}} t_x.
\end{gather*}

Now we recall the following known result on the Bruhat order (see, for example, \cite[Lemma 2.3]{Lu-Hecke}). 

\begin{lem}\label{sxy}
Let $x, y \in W$ with $x \le y$. Let $s \in S$. Then 

(1) $\min\{x, s x\} \le \min\{y, sy\}$ and $\max\{x, s x\} \le \max\{y, s y\}$. 

(2) $\min\{x, x s\} \le \min\{y, y s\}$ and $\max\{x, x s\} \le \max\{y, y s\}$. 
\end{lem}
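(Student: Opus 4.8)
The plan is to reduce part (2) to part (1), and then to prove part (1) by a four-way case analysis driven by the lifting property of the Bruhat order.

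First I would dispose of part (2). The map $w \mapsto w\i$ is an involution of $W$ preserving both $\ell$ and the Bruhat order, and it interchanges left and right multiplication by $s$, since $(xs)\i = s x\i$. Thus it sends the unordered pair $\{x, xs\}$ to $\{x\i, s x\i\}$ and commutes with taking minima and maxima with respect to $\ell$. Applying part (1) to $x\i \le y\i$ and the reflection $s$ therefore yields all four inequalities of part (2) for $x \le y$. So it remains to treat part (1).

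For part (1) the key input is the standard lifting property of the Bruhat order, which I would either quote or derive from the subword characterization of $\le$: for $u \le w$ and $s \in S$, (a) if $sw < w$ then $su \le w$; (b) if moreover $su > u$ then also $u \le sw$; and (c) if $su > u$ and $sw > w$ then $u \le w$ is equivalent to $su \le sw$. With this in hand I would split according to the signs of $\ell(sx)-\ell(x)$ and $\ell(sy)-\ell(y)$. When $sx>x$ and $sy>y$, the minima are $x\le y$ (the hypothesis) and the maxima are $sx\le sy$ by (c). When $sx>x$ and $sy<y$, one checks $x\ne y$ forces $x<y$, and then (a),(b) give exactly $sx\le y=\max\{y,sy\}$ and $x\le sy=\min\{y,sy\}$. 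When $sx<x$ and $sy>y$, both required inequalities, $sx\le x\le y$ and $x\le y\le sy$, follow from $x\le y$ by transitivity alone. Finally, when $sx<x$ and $sy<y$, the maxima give $x\le y$ (the hypothesis) and the minima $sx\le sy$ follow from (c) applied to $sx,sy$, for which $s$ is an ascent, converting $x=s(sx)\le s(sy)=y$ into $sx\le sy$.

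The only point demanding care is the bookkeeping of which of $x,sx$ (respectively $y,sy$) is the minimum and which the maximum in each case, together with the small observation that in the mixed case the hypothesis $x\le y$ is automatically strict, so that the lifting property is applicable. I do not expect a genuine obstacle here: once the lifting property is available, the statement is a routine, if slightly tedious, enumeration, and the main content of the lemma is really just this standard property of the Bruhat order.
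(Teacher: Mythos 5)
Your proof is correct, and there is essentially nothing in the paper to compare it against: the paper does not prove this lemma at all, but simply quotes it as a known fact about the Bruhat order, citing \cite[Lemma 2.3]{Lu-Hecke}. Your argument --- reducing (2) to (1) via the order- and length-preserving involution $w \mapsto w\i$, and then settling (1) by the four-case analysis driven by the lifting property (including the correct observations that $x \le y$ is automatically strict in the mixed ascent/descent case, and that in the double-descent case one applies the ascent-version equivalence to the pair $sx, sy$) --- is precisely the standard proof found in the cited source, so the proposal is sound and matches the intended justification.
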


\begin{lem}\label{sxs}
Let $\Sigma \in W_{\d, \max}/\approx_\d$ and $s \in S$. Then $\{x \in W; x \notin W_{\le \Sigma}, s x \in W_{\le \Sigma}\}=\{x \in W; x \notin W_{\le \Sigma}, x \d(s) \in W_{\le \Sigma}\}$. 
\end{lem}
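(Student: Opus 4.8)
The plan is to set $A=W_{\le\Sigma}$ and note first that $A$ is downward closed for the Bruhat order: if $x\le y\in A$ then $x\in A$. I will prove the two sets coincide by showing each is contained in the other. The inclusions are exchanged by the symmetry (left multiplication by $s$)~$\leftrightarrow$~(right multiplication by $\d(s)$), which also swaps parts (1) and (2) of Lemma~\ref{sxy}, so it suffices to prove $\{x;\,x\notin A,\ sx\in A\}\subset\{x;\,x\notin A,\ x\d(s)\in A\}$ and invoke symmetry for the reverse.

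Two reductions precede the main argument. If $x\notin A$ and $sx\in A$ then necessarily $sx<x$, since otherwise $x<sx\in A$ would force $x\in A$. The structural input is the following claim: if $u\in\Sigma$ and $su>u$, then $su\d(s)\in\Sigma$ and $su\d(s)<su$. Indeed $su\d(s)$ is $\d$-conjugate to $u$ and so lies in the same class $\co$; as $\ell(u)$ is maximal in $\co$ and $\ell(su\d(s))\in\{\ell(u),\ell(u)+2\}$, maximality forces $\ell(su\d(s))=\ell(u)$. Hence $u\xrightarrow{s}_\d su\d(s)$ is a length-preserving (hence reversible) step with $su\d(s)\in\co_{\max}$, and since $\Sigma$ is a single $\approx_\d$-class containing $u$, we get $su\d(s)\in\Sigma$; finally $\ell(su\d(s))=\ell(su)-1$ gives $su\d(s)<su$.

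Now take $x$ with $x\notin A$ and $sx\in A$, so $sx<x$, and choose $u\in\Sigma$ with $sx\le u$. Applying the ``$\max$'' half of Lemma~\ref{sxy}(1) to $sx\le u$ gives $x=\max\{sx,x\}\le\max\{u,su\}$; if $su<u$ this reads $x\le u$, contradicting $x\notin A$, so $su>u$ and $x\le su$. Set $u'=su\d(s)$, so $u'\in\Sigma$ and $u'<su$ by the claim. Applying the ``$\min$'' half of Lemma~\ref{sxy}(2) (right multiplication by $\d(s)$) to $x\le su$ yields $\min\{x,x\d(s)\}\le\min\{su,su\d(s)\}=u'$. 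Were $x\d(s)>x$, the left side would equal $x\le u'\in\Sigma$, forcing $x\in A$ — impossible; hence $x\d(s)<x$ and $\min\{x,x\d(s)\}=x\d(s)\le u'\in\Sigma$, so $x\d(s)\in A$. Thus $x$ lies in the right-hand set. The reverse inclusion is the mirror image, using the right-hand form of the claim ($u\d(s)>u\Rightarrow su\d(s)\in\Sigma$ with $su\d(s)<u\d(s)$) and exchanging the roles of parts (1) and (2) of Lemma~\ref{sxy}.

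The only nontrivial ingredient is the structural claim that $\Sigma$ is stable under the twisted conjugation $u\mapsto su\d(s)$ whenever this does not drop the length — exactly the point at which maximality of $\Sigma$ in its $\d$-conjugacy class and the fact that $\Sigma$ is a single $\approx_\d$-class are used. Everything else is careful bookkeeping with the monotonicity of Lemma~\ref{sxy}, so I expect the main obstacle to be organizational: keeping the ascent/descent alternatives and the two directions ($\min$ versus $\max$) of the Bruhat inequalities straight, rather than any genuine difficulty.
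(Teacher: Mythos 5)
Your proof is correct and follows essentially the same route as the paper's: both hinge on the observation that for $w \in \Sigma$ with $sw > w$, maximality forces $\ell(sw\d(s)) = \ell(w)$, so $sw\d(s) \in \Sigma$ with $sw\d(s) < sw$, and then both chase the inequalities $x \le sw$ and $\min\{x, x\d(s)\} \le sw\d(s)$ through Lemma~\ref{sxy}. Your write-up is merely more explicit than the paper's on two points it leaves implicit --- the reversibility of the length-preserving step $w \xrightarrow{s}_\d sw\d(s)$ and the mirrored argument for the reverse inclusion.
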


\begin{proof}
Let $x \in W$ with $x \notin W_{\le \Sigma}, s x \in W_{\le \Sigma}$. By definition, $s x \le w$ for some $w \in \Sigma$. Since $x \nleqslant w$, we have $s x <x$ and $s w>w$ by Lemma \ref{sxy}. Thus $\ell(s w \d(s)) \ge \ell(s w)-1=\ell(w)$. Since $w \in W_{\d, \max}$, $\ell(s w \d(s))=\ell(w)$ and $s w s \in \Sigma$. Moreover, $s w \d(s)<s w$. 

Since $s x \le w$ and $w<s w$, $x \le s w$. By Lemma \ref{sxy}, $\min\{x, x \d(s)\} \le s w \d(s)$. Since $x \notin W_{\le \Sigma}$, $x \d(s) \in W_{\le \Sigma}$. 
\end{proof}

\begin{lem}\label{t-center}
Let $\Sigma \in W_{\d, \max}/\approx_\d$. Then $t_{\le \Sigma} \in Z(H_0)_\d$. 
\end{lem}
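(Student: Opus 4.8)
The plan is to verify the defining identity of the $\d$-center on the algebra generators of $H_0$ and then to propagate it. Recall $Z(H_0)_\d=\{h: h'h=h\,\d(h') \text{ for all } h'\}$. For fixed $h=t_{\le\Sigma}$, both sides of $h'h=h\d(h')$ are linear in $h'$, and the set of $h'$ satisfying the identity is closed under products because $\d$ is an algebra endomorphism: if $h'_i\,t_{\le\Sigma}=t_{\le\Sigma}\,\d(h'_i)$ for $i=1,2$, then $h'_1h'_2\,t_{\le\Sigma}=t_{\le\Sigma}\,\d(h'_1h'_2)$. Since the $t_s$ ($s\in S$) generate $H_0$ and $\d(t_s)=t_{\d(s)}$, it therefore suffices to prove
$$t_s\,t_{\le\Sigma}=t_{\le\Sigma}\,t_{\d(s)}\qquad\text{for all } s\in S.$$

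I would then expand both sides using the $H_0$-relations $t_s t_x=t_{sx}$ if $sx>x$ and $t_s t_x=-t_x$ if $sx<x$, together with the mirror-image rules on the right. Writing $t_s\,t_{\le\Sigma}=\sum_{x\in W_{\le\Sigma}}t_s t_x$, every summand is supported on elements $y$ with $sy<y$, and collecting contributions one finds that for such $y$ the coefficient of $t_y$ is obtained by adding $+1$ when $sy\in W_{\le\Sigma}$ (the term $x=sy$) and $-1$ when $y\in W_{\le\Sigma}$ (the term $x=y$); for $y$ with $sy>y$ the coefficient is $0$. Symmetrically, in $t_{\le\Sigma}\,t_{\d(s)}$ the coefficient of $t_y$, now supported on $y$ with $y\d(s)<y$, adds $+1$ when $y\d(s)\in W_{\le\Sigma}$ and $-1$ when $y\in W_{\le\Sigma}$. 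The claimed equality thus reduces to a term-by-term comparison indexed by $y\in W$.

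This comparison splits according to the signs of $sy<y$ and $y\d(s)<y$, and it rests on two ingredients: that $W_{\le\Sigma}$ is an order ideal for the Bruhat order (immediate from its definition), and Lemma~\ref{sxs}, which equivalently states that for $x\notin W_{\le\Sigma}$ one has $sx\in W_{\le\Sigma}$ if and only if $x\d(s)\in W_{\le\Sigma}$. When $y\in W_{\le\Sigma}$, the order ideal property pulls $sy$ and $y\d(s)$ into $W_{\le\Sigma}$ whenever they are shorter than $y$, so the coefficients agree directly; when $y\notin W_{\le\Sigma}$, Lemma~\ref{sxs} applied to $x=y$ matches the $+1$ contributions. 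The delicate point—which I expect to be the main obstacle—is the mismatch of supports: a given $y$ may have $sy<y$ but $y\d(s)>y$ (or vice versa), so $t_y$ occurs on only one side and one must show the surviving coefficient actually vanishes. This is exactly where both tools are used together: $sy\in W_{\le\Sigma}$ would force $y\d(s)\in W_{\le\Sigma}$ by Lemma~\ref{sxs}, and then $y<y\d(s)$ with downward closure would put $y\in W_{\le\Sigma}$, contradicting $y\notin W_{\le\Sigma}$. Once these boundary cases are dispatched, all coefficients match, yielding $t_s\,t_{\le\Sigma}=t_{\le\Sigma}\,t_{\d(s)}$ and hence $t_{\le\Sigma}\in Z(H_0)_\d$.
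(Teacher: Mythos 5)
Your proof is correct and follows essentially the same route as the paper: reduce to the generator identity $t_s\,t_{\le\Sigma}=t_{\le\Sigma}\,t_{\d(s)}$, expand both sides via the $H_0$-relations, and match the surviving terms using the downward closure of $W_{\le\Sigma}$ together with Lemma~\ref{sxs}. Your coefficient-by-coefficient comparison (including the boundary case where $sy<y$ but $y\d(s)>y$) is just a more explicit rendering of the paper's cancellation argument, which shows both sides equal $\sum_{x \notin W_{\le \Sigma},\, sx \in W_{\le \Sigma}} t_x$; you also spell out the reduction to generators that the paper leaves implicit.
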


\begin{proof}
Let $s \in S$. Then $$t_s t_{\le \Sigma}=\sum_{x \in W_{\le \Sigma}} t_s t_x=\sum_{x, s x \in W_{\le \Sigma}} t_s t_x+\sum_{y \in W_{\le \Sigma}, s y \notin W_{\le \Sigma}} t_s t_x.$$ 

If $x, s x \in W_{\le \Sigma}$, then $t_s t_x+t_s t_{s x}=0$. If $y \in W_{\le \Sigma}, s y \notin W_{\le \Sigma}$, then $y<s y$ and $t_s t_y=t_{s y}$. Therefore $$t_s t_{\le \Sigma}=\sum_{x \in W; x \notin W_{\le \Sigma}, s x \in W_{\le \Sigma}} t_x.$$

Similarly, $$t_{\le \Sigma} t_{\d(s)}=\sum_{x \in W; x \notin W_{\le \Sigma}, x \d(s) \in W_{\le \Sigma}} t_x.$$

By Lemma \ref{sxs}, $t_s t_{\le \Sigma}=t_{\le \Sigma} t_{\d(s)}$ for any $s \in S$. Thus $t_{\le \Sigma} \in Z(H_0)_\d$. 
\end{proof}

\begin{thm}\label{0-center}
The elements $\{t_{\le \Sigma}\}_{\Sigma \in W_{\d, \max}/\approx_\d}$ form a basis of $Z(H_0)_\d$. 
\end{thm}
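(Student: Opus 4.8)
The plan is to prove that the family $\{t_{\le \Sigma}\}_{\Sigma \in W_{\d, \max}/\approx_\d}$ is both linearly independent and spanning in $Z(H_0)_\d$. Linear independence comes essentially for free from the triangular structure of the $t_{\le \Sigma}$ with respect to the basis $\{t_w\}_{w \in W}$: each $t_{\le \Sigma}$ contains, with coefficient $1$, the terms $t_w$ for $w \in \Sigma$, and by Corollary \ref{ff} (together with Theorem \ref{max-elt}(2)) the maximal-length elements appearing in distinct $t_{\le \Sigma}$ sit in distinct $\approx_\d$-classes of $W_{\d, \max}$, hence in distinct $\d$-conjugacy classes at the top length. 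So I would order the $\Sigma$'s by the length of their maximal elements and read off independence from the leading terms. Lemma \ref{t-center} already guarantees each $t_{\le \Sigma}$ lies in $Z(H_0)_\d$, so these vectors are a linearly independent subset of the center.

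The real content is surjectivity: showing that every central element is an $\L$-linear (here $\CC$-linear) combination of the $t_{\le \Sigma}$. The strategy is to take an arbitrary $z = \sum_{w} a_w\, t_w \in Z(H_0)_\d$ and peel off the top. Let $w$ be an element of maximal length occurring in $z$ with $a_w \neq 0$; I want to argue that $w$ must be a maximal-length element of its $\d$-conjugacy class, i.e.\ $w \in W_{\d,\max}$, and that $z - a_w\, t_{\le \Sigma}$ (where $\Sigma$ is the $\approx_\d$-class of $w$) again lies in $Z(H_0)_\d$ but has strictly smaller top length or fewer top terms. Iterating this descent expresses $z$ as a combination of the $t_{\le \Sigma}$.

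To extract the constraint that centrality forces on the top coefficients, I would use the computation already carried out inside the proof of Lemma \ref{t-center}. For $s \in S$, centrality gives $t_s z = z\, t_{\d(s)}$. Expanding both sides and comparing coefficients: the term $t_s t_w$ either raises length (when $sw > w$, contributing $t_{sw}$) or kills a pair (when $sw < w$, so $t_s t_w = 0$ after the relation $t_s^2 = -t_s$ for $q=0$); symmetrically on the right with $w\,\d(s)$. Looking at coefficients of words of maximal length on both sides forces, for each $s$, a balance condition relating $a_w$ to $a_{sw\d(s)}$ whenever the lengths match. Running this over all $s$ and chaining through the relation $w \xrightarrow{s}_\d$ shows that the top-length support of $z$ is a union of $\approx_\d$-classes in $W_{\d,\max}$ on which the coefficients are constant, and moreover that the full ``downward cone'' $W_{\le \Sigma}$ must be present with that constant coefficient — which is exactly the shape of $t_{\le \Sigma}$. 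This uses Lemma \ref{sxy} and Lemma \ref{sxs} to propagate the coefficient equalities down the Bruhat order.

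The \textbf{main obstacle} I anticipate is the bookkeeping in this descent: verifying that once the top-length coefficients are pinned down to be constant along $\approx_\d$-classes, subtracting the corresponding $t_{\le \Sigma}$ genuinely clears the entire cone $W_{\le \Sigma}$ without disturbing the centrality of the remainder. The subtle point is that a single $t_{\le \Sigma}$ overlaps many other cones below it, so the coefficients at lower lengths are not independent of the higher choices; one must argue that the central element's lower coefficients are forced, via the same $t_s z = z t_{\d(s)}$ relations, to match precisely the overlap pattern produced by the $t_{\le \Sigma}$'s. I expect the cleanest route is to prove the stronger statement that the coefficient function $w \mapsto a_w$ of any central element is constant on the fibers of the ``$\Sigma$-decomposition'' of the Bruhat order and determined entirely by its values on $W_{\d,\max}$, so that the descent terminates after finitely many steps with no remainder. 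Getting Lemma \ref{sxs} to do the work of showing ``$s x \in W_{\le \Sigma} \iff x\d(s) \in W_{\le\Sigma}$ for $x \notin W_{\le \Sigma}$'' is what makes the left and right multiplication constraints compatible, and I would lean on it heavily here.
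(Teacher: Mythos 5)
Your plan is, in outline, the paper's own proof: membership of each $t_{\le \Sigma}$ in the center via Lemma \ref{t-center}, linear independence from the triangularity $\supp(t_{\le \Sigma})_{\max}=\Sigma$, and spanning by comparing top-length coefficients in $t_s z = z\, t_{\d(s)}$ and descending. There is, however, one genuinely missing ingredient. The claim that the top-length support of a central $z$ lies in $W_{\d,\max}$ does \emph{not} follow from ``chaining'' the coefficient identities alone. What the comparison of the length-$(\ell(w)+1)$ terms of $t_s z$ and $z\, t_{\d(s)}$ actually gives is: if $w \in \supp(z)_{\max}$ and $sw>w$ or $w\d(s)>w$, then $s w \d(s) \in \supp(z)_{\max}$ with $\ell(s w \d(s))=\ell(w)$ and $a_{s w \d(s)}=a_w$. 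So top-support elements cannot be raised by a \emph{single} conjugation step, and the top support is a union of $\approx_\d$-classes with constant coefficients --- but local non-raisability does not by itself imply $w \in W_{\d,\max}$. The paper closes exactly this gap with Theorem \ref{max-elt}(1): if $w \notin W_{\d,\max}$, one produces $w'$ with $\ell(w')=\ell(w)+2$ and $s \in S$ with $s w' \d(s) \approx_\d w$; the propagation then forces $s w' \d(s)$, and hence $w'$ itself, into $\supp(z)_{\max}$, contradicting maximality. You cite Theorem \ref{max-elt} only for independence; this use of part (1) --- a nontrivial theorem, not a formal consequence of centrality --- is the step your sketch asserts without proof.

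Two smaller points. Your anticipated ``main obstacle'' about overlapping cones is not an obstacle: since each $t_{\le \Sigma}$ is central and $\supp(t_{\le \Sigma})_{\max}=\Sigma$, the element $z-\sum_\Sigma a_\Sigma t_{\le \Sigma}$ is again central with strictly smaller top length, and one simply inducts on the top length (the paper takes a counterexample of minimal top length); no analysis of how the lower parts of the cones interact is needed, so the ``stronger statement'' you propose to prove is an unnecessary detour, and Lemmas \ref{sxy} and \ref{sxs} are in fact only needed inside Lemma \ref{t-center}, not in the descent. Finally, a sign slip: at $q=0$ one has $t_s t_w = -t_w$ (not $0$) when $sw<w$, the relation being $t_s^2=-t_s$; the cancellation you invoke is between pairs, $t_s t_x + t_s t_{sx}=0$ for $x<sx$, as in the proof of Lemma \ref{t-center}.
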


\begin{proof} For any $h=\sum_{w \in W} a_w t_w \in H_0$, we write $\supp(h)=\{w \in W; a_w \neq 0\}$. Let $\supp(h)_{\max}$ be the set of maximal length elements in $\supp(h)$. We show that 

(a) If $h \in Z(H_0)_\d$ and $w \in \supp(h)_{\max}$, then $s w \d(s) \in \supp(h)_{\max}$ and $a_{s w \d(s)}=a_w$ for any $s \in S$ with $s w>w$ or $w s>w$.  

Without loss of generality, we assume that $s w>w$. Then $s w \in \supp(t_s h)=\supp(h t_{\d(s)})$ and 
\begin{gather*} \supp(t_s h)_{\max}=\{s x; x \in \supp(h)_{\max}, s x>x\}, \\
\supp(h t_{\d(s)})_{\max}=\{y \d(s); y \in \supp(h)_{\max}, y \d(s)>y\}.
\end{gather*}

Therefore $s w \d(s) \in \supp(h)_{\max}$ and $\ell(s w \d(s))=\ell(w)$. The coefficient of $t_{s w}$ in $t_s h$ is $a_w$ and the coefficient of $t_{s w}=t_{(s w \d(s)) \d(s)}$ in $h t_{\d(s)}$ is $a_{s w \d(s)}$. Thus $a_w=a_{s w \d(s)}$. 

(a) is proved. 

Now we show that 

(b) If $h \in Z(H_0)_\d$, then $\supp(h)_{\max} \subset W_{\d, \max}$. 

If $w \notin W_{\d, \max}$, then by Theorem \ref{max-elt}, there exists $w'$ with $\ell(w')=\ell(w)+2$ and $s \in S$ with $w' \to_\d s w' \d(s) \approx_\d w$. By (a), $s w' \d(s) \in \supp(h)_{\max}$ since $s w' \d(s) \approx_\d w$. Since $s w'<w'$, by (a) again, $w' \in \supp(h)_{\max}$. That is a contradiction. 

(b) is proved. 

Now suppose that $\oplus_{\Sigma \in W_{\d, \max}/\approx_\d} \CC t_{\le \Sigma} \subsetneqq Z(H_0)_\d$. Let $h$ be an element in $Z(H_0)_\d - \oplus_{\Sigma \in W_{\d, \max}/\approx_\d} \CC t_{\le \Sigma}$ and $\max_{w \in \supp(h)} \ell(w) \le \max_{w \in \supp(h')} \ell(w)$ for any $h' \in Z(H_0)_\d - \oplus_{\Sigma \in W_{\d, \max}/\approx_\d} \CC t_{\le \Sigma}$. 

By (a) and (b), $\supp(h)_{\max}$ is a union of $\Sigma$ with $\Sigma \in W_{\d, \max}/\approx_\d$. By (a), if $\Sigma \subset \supp(h)_{\max}$, then $a_w=a_{w'}$ for any $w, w' \in \Sigma$. We set $a_\Sigma=a_w$ for any $w \in \Sigma$. Set $h'=h-\sum_{\Sigma \subset \supp(h)_{\max}} a_\Sigma t_{\le \Sigma}$. Then $h' \in Z(H_0)_\d - \oplus_{\Sigma \in W_{\d, \max}/\approx_\d} \CC t_{\le \Sigma}$. But $\max_{w \in \supp(h')} \ell(w)<\max_{w \in \supp(h)} \ell(w)$. That is a contradiction. 
\end{proof}

\subsection{} In fact, Theorem \ref{0-center} also holds for the $0$-Hecke algebras associated to any affine Weyl group and the proof is similar (the only difference is that one use \cite[Main Theorem 1.1]{Ro} instead of Theorem \ref{max-elt}). 

On the other hand, there are other explicit descriptions of the centers of finite and affine Hecke algebras. 
\begin{itemize}
\item Geck and Rouquier \cite[Theorem 5.2]{GR} gave a basis of the centers of finite Hecke algebras with parameter $q \neq 0$. 

\item Bernstein, and Lusztig \cite[Proposition 3.11]{Lu89} gave a basis of the centers of affine Hecke algebras with parameter $q \neq 0$. 

\item Vign\'eras \cite[Theorem 1.2]{Vi} gave a basis of the centers of affine $0$-Hecke algebras and pro-$p$ Hecke algebras. 
\end{itemize}

It is interesting to compare Theorem \ref{0-center} (for finite and affine $0$-Hecke algebras) with the above results. 

\section{Cocenters of $0$-Hecke algebras}

\subsection{} For any $\Sigma \in W_{\d, \min}/\approx_\d$, we denote by $T_{\Sigma}$ the image of $T_w$ in $\overline{\bH}_\d$ for any $w \in \Sigma$. By Proposition \ref{ww'}, the element $T_{\Sigma}$ is well-defined. Similar to the proof of Theorem \ref{q-basis}, we have 


\begin{prop}\label{q-0-span}
The set $\{T_{\Sigma}\}_{\Sigma \in W_{\d, \min}/\approx_\d}$ spans $\overline{\bH}_\d$. 
\end{prop}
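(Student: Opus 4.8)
The plan is to show that the images $\{T_\Sigma\}_{\Sigma \in W_{\d,\min}/\approx_\d}$ of the minimal-length elements span the whole cocenter $\overline{\bH}_\d$. The natural strategy is a descending induction on length, exactly parallel to the proof of Theorem \ref{q-basis} for $q \neq 0$, except that here we work in the generic algebra $\bH$ and keep track of the $\qq$-dependence. Since $\{T_w; w \in W\}$ is an $\L$-basis of $\bH$, the images $\{T_w\}_{w \in W}$ certainly span $\overline{\bH}_\d$ as an $\L$-module, so it suffices to prove that each $T_w$ lies in the $\L$-span of $\{T_\Sigma\}$.

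First I would fix $w \in W$ and argue by induction on $\ell(w)$. If $w$ is already of minimal length in its $\d$-conjugacy class $\co$, then by Theorem \ref{min-elt}(2) together with Proposition \ref{ww'}, the image of $T_w$ equals some $T_\Sigma$ (strong $\d$-conjugacy within $\co_{\min}$ does not change the image, and on minimal elements the relevant moves are length-preserving, so $w \approx_\d w'$ suffices), and we are done. Otherwise, by Theorem \ref{min-elt}(1) there is a chain $w = w_0, w_1, \dots, w_n = w'$ with $w' \in \co_{\min}$ and each step $w_{k-1} \xrightarrow{s}_\d w_k$. It therefore suffices to handle a single elementary step $w \xrightarrow{s}_\d v$ with $v = s w \d(s)$ and $\ell(v) \le \ell(w)$.

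The key computation is the single-step reduction. Suppose $\ell(v) < \ell(w)$, so one of $sw < w$ or $w\d(s) < w$ holds; say $sw < w$, the other case being symmetric. Then $T_w = T_s T_{sw}$, and using the quadratic relation $T_s^2 = (\qq - 1)T_s + \qq$ I can write the image of $T_w$ in $\overline{\bH}_\d$ as
\begin{equation*}
T_w = T_s T_{sw} \equiv T_{sw}T_{\d(s)} = T_{sw}\bigl((\qq-1) + \qq T_{\d(s)}\i\cdots\bigr),
\end{equation*}
but more cleanly: modulo $\d$-commutators, $T_s T_{sw} \equiv T_{sw} T_{\d(s)}$, and now I expand $T_{sw} T_{\d(s)}$ via the quadratic relation into a combination of $T_{s w \d(s)} = T_v$ (when lengths add) and lower-order terms supported on strictly shorter elements, with coefficients in $\L$. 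Each such lower term has length strictly less than $\ell(w)$, and $T_v$ has length $\le \ell(w)$; the point is that $v$ is $\d$-conjugate to $w$ and closer to the minimal length, so iterating along the chain and invoking the induction hypothesis on all the genuinely shorter terms expresses $T_w$ in the $\L$-span of $\{T_\Sigma\}$.

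The main obstacle I anticipate is bookkeeping the lower-order error terms produced by the quadratic relation and confirming that they all have length strictly smaller than $\ell(w)$, so that the induction is well-founded; this is where one must be careful, since the naive step only gives $\ell(v) \le \ell(w)$, and the length-preserving moves must be absorbed using $\approx_\d$ and Proposition \ref{ww'} rather than by induction. Once it is checked that every term other than the $\approx_\d$-equivalent ones is strictly shorter, the descending induction closes and the spanning statement follows.
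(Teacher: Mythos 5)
Your proposal is correct and is essentially the paper's own argument: the paper's proof of Proposition \ref{q-0-span} is literally the one-line remark ``Similar to the proof of Theorem \ref{q-basis}'', i.e.\ the Geck--Pfeiffer descending induction on length that you carry out, and you correctly isolate the one point where working over $\L$ (rather than with $q\neq 0$) matters --- length-preserving steps cannot be handled by induction and must be absorbed via $\approx_\d$ and Proposition \ref{ww'}, while a strictly length-decreasing step ($sw<w$, $sw\d(s)<sw$) expands as $T_w=T_sT_{sw}\equiv T_{sw}T_{\d(s)}=(\qq-1)T_{sw}+\qq T_{sw\d(s)}$ modulo $[\bH,\bH]_\d$, with both terms of strictly smaller length. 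The only blemishes are the stray $T_{\d(s)}\i$ in your display, which is not available since $T_{\d(s)}$ is not invertible over $\L=\CC[\qq]$ (you rightly discard it for the quadratic-relation expansion), and the superfluous appeal to Theorem \ref{min-elt}(2) in the base case, where the image of $T_w$ for $w\in W_{\d,\min}$ is $T_\Sigma$ by the very definition of $T_\Sigma$.
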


\subsection{} 
Via the natural bijection $f: \G_\d \to W_{\d, \min}/\approx_\d$ in Corollary \ref{fff}, we may write $T_{(J, C)}$ for $T_{f(J, C)}$. We also write $t_{(J, C)}=t_{f(J, C)}$ for $T_{(J, C)} \otimes 1 \in \overline{H_{0, \d}}=\overline{\bH}_\d \otimes_\L \CC_0$. 

It is worth mentioning that $\overline{\bH}_\d$ is not a free module over $\L$ by Theorem \ref{q-basis} and Theorem \ref{main} we will prove later. This is because $\dim \overline{H_{q, \d}}=\sharp cl(W)_\d$ for any $q \neq 0$ and $\dim \overline{H_{0, \d}}=\sharp W_{\d, \min}/\approx_\d$. These numbers do not match in general (see Example \ref{eg}). 

\subsection{} Now we come to the cocenter of $0$-Hecke algebras. 

We first recall the Demazure product. 

By \cite{He08}, for any $x, y \in W$, the set $\{u v; u \le x, v \le y\}$ contains a unique maximal element. We denote this element by $x*y$ and call it the {\it Demazure product} of $x$ and $y$. It is easy to see that $\supp(x*y)=\supp(x) \cup \supp(y)$. The following result is proved in \cite[Lemma 1]{He08}.

\begin{lem}\label{5-2}
Let $x, y \in W$. Then $$t_x t_y=(-1)^{\ell(x)+\ell(y)-\ell(x*y)} t_{x*y}.$$
\end{lem}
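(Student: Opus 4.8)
The plan is to induct on $\ell(x)$, reducing the general case to the case where $x=s$ is a single simple reflection, and then to propagate the formula using the associativity of the Demazure product.

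First I would settle the base case $x=s\in S$. Specializing the quadratic relation $(T_s+1)(T_s-\qq)=0$ at $\qq=0$ gives $t_s^2=-t_s$ in $H_0$. If $sw>w$, then the relation $T_sT_w=T_{sw}$ yields $t_st_w=t_{sw}$; since here $s*w=sw$ and $\ell(s)+\ell(w)-\ell(s*w)=0$, the formula holds. If $sw<w$, write $w=s(sw)$ with $\ell(w)=\ell(sw)+1$, so $t_w=t_st_{sw}$ and hence $t_st_w=t_s^2t_{sw}=-t_st_{sw}=-t_w$; here $s*w=w$ and $\ell(s)+\ell(w)-\ell(s*w)=1$, so the formula again holds. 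Thus $t_st_w=(-1)^{\ell(s)+\ell(w)-\ell(s*w)}t_{s*w}$ for every $s\in S$ and $w\in W$.

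For the inductive step I would assume $\ell(x)\ge 1$ and pick $s\in S$ with $sx<x$; setting $x'=sx$ gives $\ell(x)=\ell(x')+1$ and $t_x=t_st_{x'}$. Applying the induction hypothesis to $t_{x'}t_y$ and then the base case to $t_s t_{x'*y}$, I obtain $t_xt_y=t_s(t_{x'}t_y)=(-1)^{a+b}t_{s*(x'*y)}$, where $a=\ell(x')+\ell(y)-\ell(x'*y)$ and $b=\ell(s)+\ell(x'*y)-\ell(s*(x'*y))$. Since $sx'=x>x'$ we have $s*x'=x$, and associativity of the Demazure product gives $s*(x'*y)=(s*x')*y=x*y$. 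The two exponents then telescope: $a+b=\ell(x')+1+\ell(y)-\ell(x*y)=\ell(x)+\ell(y)-\ell(x*y)$, which is exactly the claimed sign, completing the induction.

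The exponent bookkeeping and the base case are routine; the one external ingredient I rely on is the associativity $s*(x'*y)=(s*x')*y$. This is the main point to pin down: it is the statement that $*$ is the multiplication of the $0$-Hecke monoid (equivalently, that the maximal element of $\{uv;u\le x, v\le y\}$ depends associatively on the pair $(x,y)$), and it is part of the standard theory of the Demazure product recalled before the statement. If one prefers to avoid quoting full associativity, the telescoping needs only the single identity $s*(x'*y)=(sx')*y$ in the case $sx'>x'$, and this can be verified directly from the defining maximality property of $*$.
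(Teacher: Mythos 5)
Your proof is correct. Note that the paper itself does not prove this lemma at all --- it quotes it from \cite[Lemma 1]{He08} --- so there is no in-paper argument to compare against; your induction supplies the missing proof, and it is essentially the standard one. The base case is right: specializing $(T_s+1)(T_s-\qq)=0$ at $\qq=0$ gives $t_s^2=-t_s$, and the two cases $sw>w$ (where $s*w=sw$, sign $+1$) and $sw<w$ (where $s*w=w$, sign $-1$) match the claimed exponent. The inductive step and the telescoping $a+b=\ell(x)+\ell(y)-\ell(x*y)$ are also right, and you correctly isolate the one nontrivial external input: the identity $s*(x'*y)=(sx')*y$ when $sx'>x'$. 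Your fallback suggestion to verify this directly does work, and it is worth recording that it only needs Lemma \ref{sxy}: for one inequality, write $x'*y=uv$ with $u\le x'$, $v\le y$; then $su\le\max\{x',sx'\}=sx'$ gives $s(x'*y)=(su)v\le (sx')*y$, and $x'*y\le (sx')*y$ trivially, so $s*(x'*y)\le (sx')*y$. For the other, any $uv$ with $u\le sx'$, $v\le y$ satisfies $\min\{u,su\}\le x'$ by the lifting property, so either $uv\le x'*y$ or $uv=s\bigl((su)v\bigr)$ with $(su)v\le x'*y$, and in both cases $uv\le\max\{x'*y,\,s(x'*y)\}=s*(x'*y)$; maximality then gives $(sx')*y\le s*(x'*y)$. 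With that identity pinned down (or with the general associativity of the Demazure product, which is standard but should be cited rather than derived from the lemma being proved, to avoid circularity), your argument is complete; the only cosmetic omission is the trivial case $x=1$ starting the induction.
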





\begin{lem}\label{grading}
For any $J=\d(J) \subset S$, set $H^{\supp_\d=J}_0 =\oplus_{\supp_\d(w)=J} \CC t_w$. Then $$[H_0, H_0]_\d=\oplus_{J=\d(J) \subset S} \bigl(H^{\supp_\d=J}_0 \cap [H_0, H_0]_\d \bigr).$$
\end{lem}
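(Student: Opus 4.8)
The plan is to show that the $\d$-commutator subspace $[H_0, H_0]_\d$ is homogeneous with respect to the grading $H_0 = \oplus_{J = \d(J) \subset S} H^{\supp_\d = J}_0$; once this is established, the claimed direct-sum decomposition follows from elementary linear algebra.

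First I would reduce to a statement about basic commutators. Since the bracket $[h, h'] = h h' - h' \d(h)$ is bilinear in $h$ and $h'$ (linearity in the first slot uses that $\d$ is an algebra automorphism, hence $\CC$-linear), the space $[H_0, H_0]_\d$ is spanned by the elements $[t_x, t_y] = t_x t_y - t_y t_{\d(x)}$ for $x, y \in W$. It therefore suffices to prove that each such $[t_x, t_y]$ is homogeneous, i.e.\ lies in a single summand $H^{\supp_\d = J}_0$.

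The key computation is the following support identity. By Lemma \ref{5-2}, $t_x t_y = \pm t_{x * y}$ and $t_y t_{\d(x)} = \pm t_{y * \d(x)}$, so both terms of $[t_x, t_y]$ are scalar multiples of single basis elements. Using $\supp(x * y) = \supp(x) \cup \supp(y)$ together with $\supp(\d(x)) = \d(\supp(x))$, and the fact that $\supp_\d(\d(w)) = \supp_\d(w)$ (the set $\supp_\d(w)$ is $\d$-stable, since $\d$ permutes the finite set $S$ and hence has finite order, so applying $\d$ leaves $\cup_{i \ge 0}\d^i(\supp(w))$ unchanged), I compute
$$\supp_\d(x * y) = \supp_\d(x) \cup \supp_\d(y) = \supp_\d(y) \cup \supp_\d(\d(x)) = \supp_\d(y * \d(x)).$$
Thus both $t_{x*y}$ and $t_{y * \d(x)}$ lie in $H^{\supp_\d = J}_0$ with $J = \supp_\d(x) \cup \supp_\d(y)$, and hence so does $[t_x, t_y]$. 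This step---checking that the two ``halves'' of the commutator share the same $\d$-support---is the heart of the argument and the only place where the interaction between the Demazure product, the automorphism $\d$, and $\supp_\d$ enters; I expect it to be the main obstacle, the subtlety being the verification that $\supp_\d$ is invariant under $\d$.

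Finally I would invoke the standard fact that a subspace spanned by homogeneous vectors is a graded subspace. Writing $\pi_J$ for the projection of $H_0$ onto $H^{\supp_\d = J}_0$, each generator satisfies $\pi_J([t_x, t_y]) \in \{0, [t_x, t_y]\}$, so $\pi_J$ maps the spanning set into $[H_0, H_0]_\d$ and therefore $\pi_J([H_0, H_0]_\d) \subset [H_0, H_0]_\d$ for every $J$. Consequently $[H_0, H_0]_\d = \oplus_J \pi_J([H_0, H_0]_\d) = \oplus_{J = \d(J) \subset S} \bigl(H^{\supp_\d = J}_0 \cap [H_0, H_0]_\d\bigr)$, as desired.
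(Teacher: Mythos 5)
Your proposal is correct and follows essentially the same route as the paper: both reduce to the basic commutators $[t_x, t_y]$, apply Lemma \ref{5-2} to write each product as $\pm t_{x*y}$ and $\pm t_{y*\d(x)}$, and conclude homogeneity from $\supp_\d(x*y)=\supp_\d(x)\cup\supp_\d(y)=\supp_\d(y*\d(x))$. You merely spell out two points the paper leaves implicit---the $\d$-invariance of $\supp_\d$ and the projection argument showing that a span of homogeneous elements is a graded subspace---both of which are correct.
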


\begin{proof}
By Lemma \ref{5-2}, for any $x, y \in W$, 
\begin{gather*}
t_x t_y=(-1)^{\ell(x)+\ell(y)-\ell(x*y)} t_{x*y}, \\
t_y t_{\d(x)}=(-1)^{\ell(x)+\ell(y)-\ell(y*(\d(x))} t_{y*\d(x)}.
\end{gather*}

Moreover, $\supp_\d(x*y)=\supp_\d(x) \cup \supp_\d(y)=\supp_\d(y*(\d(x))$. Thus $t_x t_y, t_y t_{\d(x)} \in H^{\supp_\d=\supp_\d(x*y)}_{0}$ and $t_x t_y-t_y t_{\d(x)} \in H^{\supp_\d=\supp_\d(x*y)}_{0}$. 
\end{proof}

\

Another result we need here is that the elliptic conjugacy classes never ``fuse''. 

\begin{thm}{$($\cite[Theorem 3.2.11]{GP00} and \cite[Theorem 5.2.2]{CH}$)$}\label{fuse}\footnote{The proof in \cite{GP00} and \cite{CH} are based on a characterization of elliptic conjugacy classes using characteristic polynomials \cite[Theorem 3.2.7 (P3)]{GP00} and \cite[Theorem 7.5 (P3)]{He05}, which is proved via a case-by-case analysis. It is interesting to find a case-free proof of these results. }
\smallskip

Let $J=\d(J) \subset S$. Let $C, C'$ be two distinct elliptic $\d$-conjugacy classes of $W_J$. Then $C$ and $C'$ are not $\d$-conjugate in $W$. 
\end{thm}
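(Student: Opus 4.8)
The plan is to produce a function on $W$ that is invariant under $\d$-conjugacy in the \emph{ambient} group $W$ and that nevertheless separates the distinct elliptic $\d$-classes of the parabolic subgroup $W_J$; non-fusion then follows formally. The natural candidate comes from the reflection representation. Let $V$ be the real reflection representation of $W$ with its $W$-invariant positive definite symmetric form (this is where finiteness of $W$ is used), and regard $\d$ as the orthogonal transformation of $V$ permuting the simple roots. For $w\in W$ view $w\d$ as an element of $O(V)$. The key point is that $\d$-conjugacy of group elements becomes ordinary conjugacy of operators: if $w'=xw\d(x)\i$ then $w'\d=x(w\d)x\i$, since $\d(x)\i\d=\d x\i$ as maps on $V$. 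Hence the characteristic polynomial $P_w(t)=\det(t\cdot\Id_V-w\d)$ is constant on each $\d$-conjugacy class of $W$.

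Next I would localize this invariant to $W_J$. Because $\d(J)=J$, both $V_J=\sum_{s\in J}\RR\a_s$ and its orthogonal complement $V_J^\perp$ are stable under $W_J$ and under $\d$, giving a $w\d$-stable decomposition $V=V_J\oplus V_J^\perp$. For $w\in W_J$ the element $w$ acts trivially on $V_J^\perp$, so $w\d|_{V_J^\perp}=\d|_{V_J^\perp}$ and
\[
P_w(t)=P_w^J(t)\cdot D_J(t),\qquad P_w^J(t)=\det(t\cdot\Id_{V_J}-w\d|_{V_J}),
\]
where $D_J(t)=\det(t\cdot\Id-\d|_{V_J^\perp})$ is independent of $w$. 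Moreover, by the geometric characterization of ellipticity, $w$ is $\d$-elliptic in $W_J$ exactly when $w\d$ has no nonzero fixed vector on $V_J$, i.e. when $P_w^J(1)\neq0$. Thus $P_w^J$ --- equivalently $P_w$, once $J$ is fixed --- is an invariant of elliptic elements that is preserved under $\d$-conjugacy in all of $W$.

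With this in hand the deduction is short: if $C,C'$ were distinct elliptic $\d$-classes of $W_J$ fusing in $W$, pick $w\in C$ and $w'\in C'$ with $w'=xw\d(x)\i$; then $P_{w'}=P_w$, hence $P_{w'}^J=P_w^J$, and it remains to conclude $C=C'$. This final step requires that the attached invariant be a \emph{complete} invariant of elliptic $\d$-classes of $W_J$, which is exactly the characterization \cite[Theorem 3.2.7 (P3)]{GP00}, \cite[Theorem 7.5 (P3)]{He05} flagged in the footnote.

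The main obstacle is precisely this completeness, and it is why the argument is currently case-by-case: the characteristic polynomial alone does not always suffice. For example, the degenerate (all-even negative cycle type) classes in type $D$ are elliptic, pairwise distinct, yet share the same $P_w^J$, so $P_w$ must be supplemented by finer conjugacy data (Carter's admissible diagrams together with the relevant sign, or an equivalent refinement) to separate them. A more structural route toward a case-free proof, which I would also try, is geometric: for $\d=\Id$ and elliptic $w\in W_J$ one has $\mathrm{im}(w-\Id)=V_J$, an intrinsic subspace, so a fusing $x$ must satisfy $xV_J=V_J$, whence $x\Phi_J=\Phi_J$ and $x\in N_W(W_J)$; non-fusion then reduces to the assertion that $N_W(W_J)$ preserves each elliptic $\d$-class of $W_J$. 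But proving this uniformly collides with the same type-$D$ degeneracy, so supplying a single type-independent complete invariant --- and thereby a case-free proof of (P3) and of non-fusion --- is the genuinely hard, still-open part; everything above reduces the theorem cleanly to that input.
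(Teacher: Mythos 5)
The first thing to say is that the paper does not prove Theorem \ref{fuse} at all: it is imported from \cite[Theorem 3.2.11]{GP00} and \cite[Theorem 5.2.2]{CH}, and the footnote records that those proofs rest on the characterization of elliptic classes by characteristic polynomials (\cite[Theorem 3.2.7 (P3)]{GP00}, \cite[Theorem 7.5 (P3)]{He05}), established case by case. Your reduction --- $w'=x w \d(x)\i$ gives $w'\d=x(w\d)x\i$ in $O(V)$, so $\det(t\cdot \Id_V-w\d)$ is constant on $\d$-classes of $W$; for $w\in W_J$ it factors as $P^J_w(t)\,D_J(t)$ along the $\d$-stable splitting $V=V_J\oplus V_J^{\perp}$, so fusion in $W$ forces $P^J_w=P^J_{w'}$ on $V_J$; conclude by completeness of this invariant on elliptic $\d$-classes of $W_J$ --- is precisely the argument by which the cited sources deduce non-fusion from (P3). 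So you have taken essentially the same route as the proofs the paper defers to, and you have correctly isolated (P3) as the one non-formal input, just as the footnote does.

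However, your penultimate paragraph contains a genuine factual error which, taken at face value, would invalidate your own final step. You assert that the invariant is incomplete, citing ``all-even negative cycle type'' elliptic classes in type $D$ sharing a characteristic polynomial. No such classes exist: in $W(D_n)$ a class of signed cycle type $(\l;\mu)$ splits into two $D_n$-classes only when $\mu=\emptyset$ and all parts of $\l$ are even, and such elements have positive cycles, hence nonzero fixed vectors, hence are \emph{not} elliptic; the elliptic classes of $W(D_n)$ are exactly those of type $(\emptyset;\mu)$ with an even number of negative cycles, they never split, and they are pairwise separated by $\prod_i(t^{\mu_i}+1)$. Thus (P3) really is a complete-invariant statement; what is case-by-case is its \emph{verification} (chiefly in the exceptional types), not any need to supplement the polynomial by Carter-diagram data --- if such supplementation were needed, your deduction of $C=C'$ from $P^J_w=P^J_{w'}$ would fail. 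Relatedly, ``still-open'' should be qualified: the theorem and (P3) are proved; only a case-free proof is open, exactly as the footnote states. Your side remark that for $\d=\Id$ a fusing element must normalize $W_J$ (via $\mathrm{im}(w-\Id)=V_J$ and $\Phi\cap V_J=\Phi_J$) is correct and is a known alternative reduction, but it too ultimately hits the same input.
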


\

Now we come to the main theorem of this section. 

\begin{thm}\label{main}
The elements $\{t_{(J, C)}\}_{(J, C) \in \G_\d}$ form a basis of $\overline{H_{0, \d}}$. 
\end{thm}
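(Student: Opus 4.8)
The plan is to prove that $\{t_{(J,C)}\}_{(J,C)\in\G_\d}$ both spans $\overline{H_{0,\d}}$ and is linearly independent. For the spanning part, I would first reduce to $\L$-coefficients. Proposition \ref{q-0-span} tells us that $\{T_\Sigma\}_{\Sigma\in W_{\d,\min}/\approx_\d}$ spans $\overline{\bH}_\d$, and under the bijection $f$ of Corollary \ref{fff} these are exactly the $T_{(J,C)}$. Specializing $\qq\mapsto 0$ via the functor $-\otimes_\L\CC_0$ is right exact, so the images $t_{(J,C)}$ span $\overline{H_{0,\d}}=\overline{\bH}_\d\otimes_\L\CC_0$. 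This gives spanning for free, so the entire content of the theorem is the linear independence.

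For linear independence, the key structural tool is the support grading of Lemma \ref{grading}: since $[H_0,H_0]_\d=\oplus_{J=\d(J)}\bigl(H_0^{\supp_\d=J}\cap[H_0,H_0]_\d\bigr)$, the cocenter itself decomposes as $\overline{H_{0,\d}}=\oplus_{J=\d(J)}\overline{H_0^{\supp_\d=J}}$, where $\overline{H_0^{\supp_\d=J}}$ denotes the image of $H_0^{\supp_\d=J}$ in the cocenter. Now each generator $t_{(J,C)}$ lives in the $\supp_\d=J$ graded piece, because by Corollary \ref{fff} the representatives $w\in C_{\min}$ satisfy $\supp_\d(w)=J$ (they are the minimal-length elements of an elliptic class $C$ of $W_J$, so their $\d$-support is exactly $J$). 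Therefore it suffices to prove, for each fixed $J=\d(J)\subset S$ separately, that the elements $\{t_{(J,C)}\}$ indexed by elliptic $\d$-conjugacy classes $C$ of $W_J$ are linearly independent in $\overline{H_{0,\d}}$.

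The main obstacle, and the heart of the argument, is this fixed-$J$ independence. I would approach it by constructing enough linear functionals on $\overline{H_{0,\d}}$ that separate the $t_{(J,C)}$. Equivalently, I want to show no nontrivial combination $\sum_C a_C\,t_{(J,C)}$ lies in $[H_0,H_0]_\d$. The natural strategy is to analyze which elements can appear when one expands commutators $t_x t_y - t_y t_{\d(x)}$ using the Demazure-product formula of Lemma \ref{5-2}: each such commutator is a difference of at most two basis elements $t_{x*y}$ and $t_{y*\d(x)}$, both supported (in the $\supp_\d$ sense) on $\supp_\d(x)\cup\supp_\d(y)$. The crucial input preventing collapse is Theorem \ref{fuse}: distinct elliptic $\d$-conjugacy classes of $W_J$ stay distinct in $W$, so the minimal-length elements attached to different $C$ cannot be linked by the $\to_\d$ relation through elements of full $\d$-support $J$. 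I expect to combine this non-fusion with Theorem \ref{min-elt} (the minimal-length elements of each $C$ form a single $\approx_\d$-class, which Proposition \ref{ww'} collapses to the single cocenter element $t_{(J,C)}$) to show that the map recording, for each basis element $t_w$ with $\supp_\d(w)=J$, the elliptic class $C$ of $W_J$ it projects to, descends to a well-defined invariant on the cocenter separating the $t_{(J,C)}$.

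The delicate point I would watch most carefully is controlling the leading-length behavior: a commutator can produce elements of strictly smaller length than either factor when the Demazure product is non-reduced, so I cannot simply argue by top-degree terms as in the center computation of Theorem \ref{0-center}. Instead I would set up an induction on length within the $\supp_\d=J$ piece, using Theorem \ref{min-elt}(1) to push any $w$ with $\supp_\d(w)=J$ down to $C_{\min}$ via $\to_\d$, and track that each elementary step either preserves the class $C$ or is forbidden by Theorem \ref{fuse}. Showing that this class assignment is genuinely a cocenter invariant — i.e. constant on the image of every $\d$-commutator — is the step I expect to require the most care, and it is where Theorem \ref{fuse} does the essential work.
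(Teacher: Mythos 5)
Your overall frame matches the paper's proof: spanning does come for free from Proposition \ref{q-0-span} together with right exactness of $-\otimes_\L\CC_0$, and the reduction of linear independence to a fixed $J=\d(J)$ via Lemma \ref{grading} is exactly the paper's first step. The genuine gap is the fixed-$J$ independence, which you leave as a plan, and the plan as described would not go through. The space $[H_0,H_0]_\d$ is spanned by elements $t_xt_y-t_yt_{\d(x)}=\pm t_{x*y}\mp t_{y*\d(x)}$, and $x*y$ need not be $\d$-conjugate to $y*\d(x)$; so the relation imposed by commutators is not conjugation, and Theorem \ref{fuse} (which only rules out conjugacy between distinct elliptic classes) cannot by itself make a ``class assignment'' descend to the cocenter. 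Moreover the assignment you propose is not even defined naively: an element $w$ with $\supp_\d(w)=J$ need not lie in an elliptic class of $W_J$. For instance, in $W=S_3$ the element $w=s_1s_2s_1$ has full support but lies in the non-elliptic class of reflections, and its image in $\overline{H_0}$ is $-t_{(S,C)}$ with $C$ the Coxeter class; the correct invariant is $w\mapsto \Sigma_w$ of Proposition \ref{sigma-w}, and proving that it is constant modulo commutators is essentially equivalent to Theorem \ref{main} itself, so your induction risks circularity.

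More decisively, your strategy of ``constructing enough linear functionals on $\overline{H_{0,\d}}$ that separate the $t_{(J,C)}$'' cannot be carried out inside $H_0$: by \cite{No} the irreducible $H_0$-modules are the one-dimensional characters $\l_K$, and $tr(t_{(J,C)},\l_K)$ equals $(-1)^{\ell(C)}$ if $J\subset K$ and $0$ otherwise, where $\ell(C)\bmod 2$ is the same for all elliptic classes $C$ of $W_J$ by \cite[Proposition 6.10]{Lu84}. Hence traces of $H_0$-representations take identical values on $t_{(J,C)}$ and $t_{(J,C')}$ — indeed the paper's final Proposition states that these differences span the kernel of the trace map. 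The separating functionals must come from outside $q=0$, and this is the paper's actual move, absent from your sketch: lift to the generic algebra $\overline{\bH}_\d$ and prove $\L$-linear independence of $\{T_{(J,C)}\}_{C}$ by specializing at every $q\neq 0$, where Theorem \ref{q-basis} gives the basis $\{T_{\co,q}\}_{\co\in cl(W)_\d}$ and Theorem \ref{fuse} guarantees that distinct elliptic $C\subset W_J$ land in distinct classes $\co$, hence in distinct basis elements; a polynomial $b_C$ vanishing at all $q\neq 0$ vanishes identically, and independence then descends to $\overline{H_{0,\d}}$, the grading having already isolated each $J$. So the skeleton of your proposal is right, but the heart of the proof — producing any functional that distinguishes $t_{(J,C)}$ from $t_{(J,C')}$ — is missing, and your proposed $q=0$-internal combinatorial route is blocked by the trace-kernel obstruction above.
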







\begin{proof}
Suppose that $\sum_{(J, C) \in \G_\d} a_{(J, C)} t_{(J, C)}=0$ in $\overline{H_{0, \d}}$ for some $a_{(J, C)} \in \CC$. 
Then by Lemma \ref{grading}, for any $J=\d(J) \subset S$, $$\sum_{C \in cl(W_J)_\d \text{ is elliptic}} a_{(J, C)} t_{(J, C)}=0.$$ 

Fix $J=\d(J) \subset S$. We show that 

(a) The set $\{T_{(J, C)}\}_{C \in cl(W_J)_\d \text{ is } elliptic}$ is a linearly independent set in $\overline{\bH}_\d$. 

Suppose that \[\sum_{C \in cl(W_J)_\d \text{ is } elliptic} b_C T_{(J, C)}=0 \in \overline{\bH}_\d\] for some $b_C \in \Lambda$. Then \[\sum_{C \in cl(W_J)_\d \text{ is } elliptic} b_C \mid_{\qq=q}  T_{(J, C)}=0 \in \overline{H_{q, \d}}\] for any $q \neq 0$. By Theorem \ref{q-basis}, the set $\{T_{(J, C), q}\}_{C \in cl(W_J)_\d \text{ is } elliptic}$ is a linearly independent set in $\overline{H_{q, \d}}$ for any $q \neq 0$. Hence $b_C \mid_{\qq=q}=0$ for any $q \neq 0$. Thus $b_C=0$. 

(a) is proved. 

In other words, $\sum_{C \in cl(W_J)_\d \text{ is } elliptic} \Lambda T_{(J, C)}$ is a free submodule of $\overline{\bH}$ with basis $T_{(J, C)}$. 
Thus $\sum_{C \in cl(W_J)_\d \text{ is } elliptic} \CC t_{(J, C)}$ is a free submodule of $\overline{H_{0, \d}}$ with basis $t_{(J, C)}$.  Therefore $a_{J, C}=0$.
\end{proof}

\subsection{} Now we relate the cocenter of $H_0$ to the representations of $H_0$. 

For any $J \subset S$, let $\l_J$ be the one-dimensional representation of $H_0$ defined by 
\[
\l_J(t_s)=\begin{cases} -1, & \text{ if } s \in J; \\ 0, & \text{ if } s \notin J. \end{cases}
\]

By \cite{No}, the set $\{\l_J\}_{J \subset S}$ is the set of all the irreducible representations of $H_0$. 

Let $R(H_0)$ be the Grothendieck group of finite dimensional representations of $H_0$. Then $R(H_0)$ is a free group with basis $\{\l_J\}_{J \subset S}$. Consider the trace map $$Tr: \overline{H_0} \to R(H_0)^*, \qquad h \mapsto (V \mapsto tr(h, V)).$$

It is easy to see that for any $(J, C) \in \G$ and $K \subset S$, 
\[
tr(t_{J, C}, \l_K)=\begin{cases} (-1)^{\ell(C)}, & \text{ if } J \subset K; \\ 0, & \text{ otherwise}. \end{cases}
\]

Here $\ell(C)$ is the length of any minimal length element in $C$. 

By \cite[Proposition 6.10]{Lu84}, for any $J \subset S$ and any two elliptic conjugacy classes $C$ and $C'$ of $W_J$, $\ell(C) \equiv \ell(C') \mod 2$. Therefore, 

\begin{prop}
The trace map $Tr: \overline{H_0} \to R(H_0)^*$ is surjective and the kernel equals $\oplus_{J \subset S, C, C' \in cl(W_J) \text{ are elliptic}} \CC \{t_{(J, C)}-t_{(J, C')}\}$.
\end{prop}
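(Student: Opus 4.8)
The plan is to compute the trace map explicitly on the basis $\{t_{(J,C)}\}_{(J,C)\in\G}$ of $\overline{H_0}$ supplied by Theorem \ref{main}, and to exploit the fact that its value depends only on $J$. Let $\{\l_K^*\}_{K\subset S}$ be the basis of $R(H_0)^*$ dual to $\{\l_K\}_{K\subset S}$. The displayed trace formula gives, for $(J,C)\in\G$,
\[
Tr(t_{(J,C)})=(-1)^{\ell(C)}\sum_{J\subset K}\l_K^*.
\]
The crucial input is \cite[Proposition 6.10]{Lu84}: all elliptic conjugacy classes of a fixed $W_J$ have lengths of the same parity, so the sign $(-1)^{\ell(C)}$ depends only on $J$. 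Writing $\e_J$ for this common sign and $\ph_J=\sum_{J\subset K}\l_K^*$, we obtain $Tr(t_{(J,C)})=\e_J\ph_J$, which is independent of the choice of $C$.

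For surjectivity, I would first record that every $W_J$ admits an elliptic conjugacy class --- for instance the class of a Coxeter element, which has full support $J$ --- so that $\ph_J$ lies in the image of $Tr$ for every $J\subset S$. It then suffices to show that $\{\ph_J\}_{J\subset S}$ is a basis of $R(H_0)^*$. But the matrix expressing the $\ph_J$ in the basis $\{\l_K^*\}$, whose $(J,K)$-entry is $1$ if $J\subset K$ and $0$ otherwise, is exactly the incidence (zeta) matrix of the Boolean lattice of subsets of $S$; it is invertible, its inverse being given by M\"obius inversion. Hence $\{\ph_J\}_{J\subset S}$, and therefore $\{\e_J\ph_J\}_{J\subset S}$, is a basis of $R(H_0)^*$, and $Tr$ is surjective.

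For the kernel, each difference $t_{(J,C)}-t_{(J,C')}$ with $C,C'$ elliptic in the same $W_J$ maps to $\e_J\ph_J-\e_J\ph_J=0$, so the displayed direct sum is contained in $\ker(Tr)$. To prove equality I would compare dimensions. By Theorem \ref{main}, $\dim\overline{H_0}=\sharp\G=\sum_{J\subset S}n_J$, where $n_J$ denotes the number of elliptic classes of $W_J$; by the surjectivity just established, $\dim\,\mathrm{Im}(Tr)=\sharp\{K\subset S\}=2^{\sharp S}=\sum_{J\subset S}1$. Hence $\dim\ker(Tr)=\sum_{J\subset S}(n_J-1)$. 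On the other hand, for fixed $J$ the elements $t_{(J,C)}-t_{(J,C')}$ span the subspace of $\oplus_C\CC t_{(J,C)}$ on which the coefficients sum to zero, of dimension $n_J-1$, and these subspaces for distinct $J$ are linearly independent because the $t_{(J,C)}$ form part of a basis of $\overline{H_0}$. Thus the claimed kernel has dimension $\sum_{J\subset S}(n_J-1)$, matching $\dim\ker(Tr)$, and equality follows.

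The main obstacle is conceptual rather than computational: the entire argument hinges on the parity statement of \cite[Proposition 6.10]{Lu84}, which forces $Tr(t_{(J,C)})$ to depend only on $J$ and thereby simultaneously produces the kernel and collapses the surjectivity question to the transparent invertibility of the inclusion matrix. The one point requiring care is to confirm that every $W_J$ really carries an elliptic class, so that no basis vector $\ph_J$ is missing from the image.
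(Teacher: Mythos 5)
Your proof is correct and follows essentially the same route the paper intends: the proposition is stated there as an immediate consequence of the displayed trace formula $tr(t_{(J,C)},\l_K)=(-1)^{\ell(C)}$ for $J\subset K$ (and $0$ otherwise) together with the parity statement of \cite[Proposition 6.10]{Lu84}, and your argument merely makes the implicit steps explicit, namely the invertibility of the inclusion (zeta) matrix of the Boolean lattice and the dimension count identifying the kernel. Your observation that every $W_J$ carries an elliptic class (e.g.\ that of a Coxeter element) is a detail the paper leaves tacit but is indeed needed for surjectivity, so including it is a point in your favor rather than a deviation.
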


\section{A partial order on $W_{\d, \min}/\approx_\d$}

\subsection{} Let $w \in W$ and $\Sigma \in W_{\d, \min}/\approx_\d$, we write $\Sigma \preceq w$ if there exists $w' \in \Sigma$ with $w' \le w$. For $w \in W$ and $\co \in cl(W)_\d$, we define $\co \preceq w$ in the same way. 

We define a partial order on $W_{\d, \min}/\approx_\d$ as follows. 

For $\Sigma, \Sigma' \in W_{\d, \min}/\approx_\d$, we write $\Sigma' \preceq \Sigma$ if $\Sigma' \preceq w$ for some $w \in \Sigma$. By \cite[Corollary 4.6]{He05}, $\Sigma' \preceq \Sigma$ if and only if $\Sigma' \preceq w$ for any $w \in \Sigma$. In particular, $\preceq$ is transitive. This defines a partial order on $W_{\d, \min}/\approx_\d$. 

We define a partial order on $cl(W)_\d$ in a similar way. 

\begin{prop}
Let $\co, \co' \in cl(W)_\d$. The following conditions are equivalent:

(1) For any $w \in \co_{\min}$, there exists $w' \in \co'_{\min}$ such that $w' \le w$. 

(2) There exists $w \in \co_{\min}$ and $w' \in \co'_{\min}$ such that $w' \le w$. 
\end{prop}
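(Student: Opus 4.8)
The implication $(1)\Rightarrow(2)$ is immediate, since $\co_{\min}\neq\emptyset$: a witness for $(1)$ at any fixed $w\in\co_{\min}$ is already a witness for $(2)$. All the content is in $(2)\Rightarrow(1)$, that is, in showing that the property ``$\co'\preceq w$'' is independent of the choice of $w\in\co_{\min}$. The first thing I would record is the one easy structural fact: the set $\Phi=\{v\in W;\ w'\le v\text{ for some }w'\in\co'_{\min}\}$ is an up-set for the Bruhat order, because $w'\le v\le v'$ forces $w'\le v'$. Thus $(2)$ asserts $w_0\in\Phi$ for a single $w_0\in\co_{\min}$, while $(1)$ asserts $\co_{\min}\subset\Phi$. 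By Theorem \ref{min-elt}(2), $\co_{\min}$ is a single strongly $\d$-conjugacy class, so it suffices to prove that $\Phi\cap\co_{\min}$ is stable under one elementary strong $\d$-conjugation between minimal length elements: given $w,\tilde w\in\co_{\min}$ with $\tilde w=xw\d(x)\i$, $\ell(xw)=\ell(x)+\ell(w)$ and $\ell(\tilde w)=\ell(w)$ (the right-additive case being symmetric), and given $w'\in\co'_{\min}$ with $w'\le w$, I must produce $\tilde w'\in\co'_{\min}$ with $\tilde w'\le\tilde w$.

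It is worth isolating where the real difficulty sits. Within a single $\approx_\d$-class the constancy of ``$\co'\preceq$'' is exactly \cite[Corollary 4.6]{He05}. The point is that $\co_{\min}$ generally splits into several $\approx_\d$-classes, one for each $(J,C)\in\G_\d$ mapping to $\co$ (Proposition \ref{g-j} and Corollary \ref{fff}), and by Corollary \ref{min} these classes consist of Bruhat-minimal elements of $\co$, hence are pairwise Bruhat-incomparable. The elementary strong $\d$-conjugation above is precisely the move that crosses between two such incomparable $\approx_\d$-classes, and it is for this crossing that a genuine argument is required.

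The plan for the transport is to conjugate the witness by the same $x$ and then correct it back to minimal length. Writing $x=s_1\cdots s_r$ reduced, the left multiplication $w'\mapsto xw'$ takes place in the length-additive direction for $w$, so Lemma \ref{sxy}(1), applied one reflection at a time, yields $xw'\le xw$. The opposite multiplication $\,\cdot\,\d(x)\i$ is length-decreasing, since it cancels $xw$ back down to $\tilde w$, and here the Bruhat order is no longer preserved: at each such step $s$ Lemma \ref{sxy}(2) only gives the weaker $\min\{p,ps\}\le\min\{q,qs\}$, so one can keep only the smaller of $p,ps$ below the shrinking target. Setting $\tilde w'=xw'\d(x)\i\in\co'$, minimality of $w'$ in $\co'$ forces $\ell(\tilde w')\ge\ell(w')$; I would then apply Theorem \ref{min-elt}(1) to descend $\tilde w'\to_\d v$ to some $v\in\co'_{\min}$ and argue that $v\le\tilde w$.

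The main obstacle is exactly this final coordination: combining the length-increasing multiplication (which preserves $\le$) with the length-decreasing one (which does not) so that the transported element stays inside the class $\co'$ while remaining Bruhat-below $\tilde w$. This is precisely the interaction between $\to_\d$ and the Bruhat order, and I expect it to be controlled by the minimality of $\tilde w$ via Corollary \ref{min} (so that $\tilde w$ is a Bruhat-minimal element of $\co$) together with the lifting machinery underlying \cite[Corollary 4.6]{He05}, which already handles the $\approx_\d$-class version of the statement. Once this monotonicity is established, stability of $\Phi\cap\co_{\min}$ under a single elementary step follows, hence $\co_{\min}\subset\Phi$, which is $(1)$.
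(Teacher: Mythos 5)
Your setup is sound: $(1)\Rightarrow(2)$ is trivial, the set $\Phi$ is an up-set, and by Theorem \ref{min-elt}(2) everything reduces to transporting a witness across one elementary strong $\d$-conjugation between minimal length elements. You also correctly locate the difficulty in the crossing between two distinct $\approx_\d$-classes inside $\co_{\min}$. But that crossing is exactly the step you never prove, and the plan you sketch does not work. If you correct the witness step by step using Lemma \ref{sxy}(2) in the length-decreasing direction, you keep $\min\{p, ps\}$ below the shrinking target, but the resulting element has the form $x w' z$ with $z$ only a ``subexpression'' of $\d(x)\i$; it is in general no longer $\d$-conjugate to $w'$, so it leaves $\co'$ and the witness is lost. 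If instead you insist on $\tilde w' = x w' \d(x)\i \in \co'$, then no Bruhat comparison between $\tilde w'$ and $\tilde w$ is available at all, and the final descent $\tilde w' \to_\d v \in \co'_{\min}$ via Theorem \ref{min-elt}(1) gives no control of $v$ with respect to $\le$: the moves $\to_\d$ do not interact with the Bruhat order in the direction you need, and neither Corollary \ref{min} nor the minimality of $\tilde w$ supplies the missing monotonicity. The appeal to ``the machinery underlying \cite[Corollary 4.6]{He05}'' cannot close this either: that machinery (\cite[Lemma 4.4]{He05}) transports witnesses along conjugation by a single simple reflection with length control, i.e.\ along $\approx_\d$-moves --- and, as you yourself observe, the step you must handle is precisely \emph{not} of that form.

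The paper closes this gap by a structural argument that avoids strong conjugacy entirely. Given $w, w_1 \in \co_{\min}$ with a witness $w' \le w$, Proposition \ref{g-j} places $w \in C_{\min}$ and $w_1 \in (C_1)_{\min}$ for pairs $(J, C), (J_1, C_1) \in \G_\d$ both mapping to $\co$; by \cite[Proposition 5.2.1]{CH} there exists $x \in W^\d$ whose conjugation sends $J$ to $J_1$ \emph{as sets of simple reflections} and $C$ to $C_1$. Such a conjugation is an isomorphism of Coxeter systems $(W_J, J) \to (W_{J_1}, J_1)$, hence preserves both length and the Bruhat order on $W_J$; since $w' \le w$ forces $\supp_\d(w') \subset J$, one gets for free that $x w' x\i \in \co'_{\min}$ and $x w' x\i \le x w x\i \in (C_1)_{\min}$. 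The remaining move from $x w x\i$ to $w_1$ now takes place inside the single $\approx_\d$-class $(C_1)_{\min}$ (Theorem \ref{min-elt}(3), by ellipticity of $C_1$), where \cite[Lemma 4.4]{He05} does apply and transports the witness to some $w'_1 \in \co'_{\min}$ with $w'_1 \le w_1$. In short, the crossing between $\approx_\d$-classes is realized by a conjugating element that maps simple reflections to simple reflections --- trivially order-preserving --- which is exactly the property a generic witness of elementary strong conjugation lacks, and exactly the ingredient missing from your proposal.
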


\begin{rmk}
We write $\co' \preceq \co$ if the conditions above are satisfied. Then the map $W_{\d, \min}/\approx_\d \to cl(W)_\d$ is compatible with the partial orders $\preceq$. 
\end{rmk}

\begin{proof}
Let $w, w_1 \in \co_{\min}$ and $w' \in \co'_{\min}$ with $w' \le w$. Let $J=\supp_\d(w)$, $J_1=\supp_\d(w_1)$ and $J'=\supp_\d(w')$. Let $C \in cl(W_J)_\d$ and $C_1 \in cl(W_{J_1})_\d$ with $w \in C$ and $w'_1 \in C_1$. By \S \ref{g-conj}, there exists $x \in W^\d$ and the conjugation of $x$ sends $J$ to $J_1$ and sends $C$ to $C_1$. Since $w' \le w$, $J' \subset J$. As the conjugation by $x$ sends simple reflections in $J$ to simple reflections in $J_1$, we have $x w' x \i \le x w x \i$. Moreover, $x w x \i \in C_1$ is a minimal length element. By Theorem \ref{min-elt}, $x w x \i \approx_\d w'$. By \cite[Lemma 4.4]{He05}, there exists $w'_1 \in \co'_{\min}$ with $w'_1 \le w_1$. 
\end{proof}

\begin{prop}\label{sigma-w} Let $w \in W$. Then 

(1) The set $\{\Sigma \in W_{\d, \min}/\approx_\d; \Sigma \preceq w\}$ contains a unique maximal element $\Sigma_w$. 

(2) The image of $t_w$ in $\overline{H_{0, \d}}$ equals $(-1)^{\ell(w)-\ell(\Sigma_w)} t_{\Sigma_w}$. 
\end{prop}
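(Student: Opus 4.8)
The plan is to prove (1) and (2) simultaneously by induction on $\ell(w)$, reducing the image of $t_w$ in $\overline{H_{0,\d}}$ one reflection at a time while keeping track of the poset $\{\Sigma\in W_{\d,\min}/\approx_\d;\ \Sigma\preceq w\}$. First I would record two elementary reduction rules in the cocenter. The defining relation $[t_s,h]=t_sh-h\,t_{\d(s)}$ gives $t_st_w\equiv t_wt_{\d(s)}$, and combining this with Lemma \ref{5-2} (together with $t_s^2=-t_s$) yields: if the step $w\xrightarrow{s}_\d sw\d(s)$ preserves length then $t_w\equiv t_{sw\d(s)}$, consistent with Proposition \ref{ww'}; whereas if $sw<w$ and $(sw)\d(s)<sw$ (a length-decreasing conjugation) then $t_w=t_st_{sw}\equiv t_{sw}t_{\d(s)}=-t_{sw}$, so $t_w\equiv -t_{sw}$ with $\ell(sw)=\ell(w)-1$. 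There is an obvious right-handed analogue.

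For the base case I would treat $w\in W_{\d,\min}$, so that $\Sigma:=[w]_{\approx_\d}$ is a genuine class in $W_{\d,\min}/\approx_\d$. Here (2) is immediate with sign $(-1)^0=1$, since $t_\Sigma$ is by definition the image of $t_w$. For (1), any $\Sigma'\preceq w$ satisfies $\Sigma'\preceq\Sigma$ directly from the definition of the order $\preceq$ (using the representative $w\in\Sigma$), so $\Sigma$ is the maximum of $\{\Sigma';\ \Sigma'\preceq w\}$ and $\Sigma_w=\Sigma$.

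For the inductive step I would assume $w\notin W_{\d,\min}$. By Theorem \ref{min-elt}(1) there is a chain $w\to_\d w''$ with $w''\in\co_{\min}$ and $\ell(w'')<\ell(w)$; taking its equal-length prefix, which is reversible because a length-preserving $\xrightarrow{s}_\d$ step runs backwards, produces $w_1\approx_\d w$ admitting a \emph{direct} length-decreasing conjugation, say $sw_1<w_1$ and $(sw_1)\d(s)<sw_1$. Then $t_w\equiv t_{w_1}\equiv -t_{sw_1}$ by the rules above, and since $\ell(sw_1)=\ell(w)-1$ the inductive hypothesis applies to $sw_1$. Setting $\Sigma_w:=\Sigma_{sw_1}$ and unwinding signs would give $t_w\equiv -(-1)^{\ell(sw_1)-\ell(\Sigma_w)}t_{\Sigma_w}=(-1)^{\ell(w)-\ell(\Sigma_w)}t_{\Sigma_w}$, which is (2).

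The real content, and what I expect to be the main obstacle, lies in proving for (1) that passing from $w$ to the shorter $sw_1$ loses no minimal class below, i.e.
\[
\{\Sigma;\ \Sigma\preceq w\}=\{\Sigma;\ \Sigma\preceq w_1\}=\{\Sigma;\ \Sigma\preceq sw_1\},
\]
so that both sets share the unique maximum supplied by induction. The first equality (invariance under $\approx_\d$) and the nontrivial inclusion $\{\Sigma\preceq w_1\}\subseteq\{\Sigma\preceq sw_1\}$ would both be handled by the same device. Given $\Sigma'\preceq w_1$, choose a minimal length representative $v\in\Sigma'$ with $v\le w_1$; Lemma \ref{sxy}(1) gives $\min\{v,sv\}\le\min\{w_1,sw_1\}=sw_1$. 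If $sv>v$ this already yields $v\le sw_1$. If $sv<v$, the minimality of $v$ in its conjugacy class forces $(sv)\d(s)>sv$, so that $v':=sv\d(s)\approx_\d v$ still lies in $\Sigma'$; then Lemma \ref{sxy}(2) applied to $sv\le sw_1$ gives $v'=\max\{sv,(sv)\d(s)\}\le\max\{sw_1,(sw_1)\d(s)\}=sw_1$, where the last equality uses precisely that the reduction step was length-decreasing, so $(sw_1)\d(s)<sw_1$. In either case $\Sigma'\preceq sw_1$, which closes the induction.
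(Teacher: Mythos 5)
Your proposal is correct and follows essentially the same route as the paper: induction on $\ell(w)$, with the base case $w\in W_{\d,\min}$, reduction via Theorem \ref{min-elt}(1) to some $w_1\approx_\d w$ admitting a length-decreasing conjugation, the sign bookkeeping $t_w\equiv t_{w_1}\equiv -t_{sw_1}$, and exactly the paper's Lemma \ref{sxy} case analysis (including the replacement $v\mapsto sv\d(s)$ when $sv<v$, justified by minimality of $v$) to show no class $\Sigma\preceq w$ is lost in passing to $sw_1$. The only cosmetic difference is that you re-derive the $\approx_\d$-invariance of $\{\Sigma;\ \Sigma\preceq w\}$ by the same device, whereas the paper simply cites \cite[Lemma 4.4]{He05} for this step.
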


\begin{rmk}
By Theorem \ref{main}, part (2) of the Proposition gives another characterization of $\Sigma_w$.
\end{rmk}

\begin{proof} We argue by induction on $\ell(w)$. 

If $w \in W_{\d, \min}$, we denote by $\Sigma_w$ the $\approx_\d$-equivalence class that contains
$w$. By definition, for any $\Sigma \in W_{\d, \min}/\approx_\d$ with $\Sigma \preceq w$, $\Sigma \preceq \Sigma_w$. Also by definition, the image of $t_w$ in $\overline{H_{0, \d}}$ is $t_{\Sigma_w}$. 

Now suppose that $w \in W_{\d, \min}$. By Theorem \ref{min-elt} (1), there exists $w' \in W$ and $s \in S$ such that $w \approx w'$ and $\ell(s w' \d(s))<\ell(w')$. Let $\Sigma \in W_{\d, \min}/\approx_\d$ with $\Sigma \preceq w$. By [7,Lemma4.4], $\Sigma \preceq w'$. Inother words, there exists $x \in \Sigma$ with $x \le w'$. 

Now we prove that

(a) $\Sigma \preceq \Sigma_{s w'}$. 

If $x<s x$, then by Lemma \ref{sxy}, $x \le s w'$ and $\Sigma \preceq s w'$. 

If $s x<x$, then $\ell(s x \d(s)) \le \ell(s x)+1=\ell(x)$. Hence $s x \d(s) \in \Sigma$. By
Lemma \ref{sxy}, $s x \le s w'$. Since $s w' \d(s)<s w'$, by Lemma \ref{sxy} again, we have $s x \d(s) \le s w'$. Thus $\Sigma \preceq s w'$. 

Since $\ell(s w')<\ell(w)$, by inductive hypothesis, $\Sigma_{s w'}$ is defined and $\Sigma \preceq \Sigma_{s w'}$. 

(a) is proved.

Since $\Sigma_{s w'} \preceq s w'$, $\Sigma_{s w'} \preceq w'$. By [7, Lemma 4.4],  $\Sigma_{s w'} \preceq w$. Thus $\Sigma_{s w'}$ is the unique maximal element in $\{\Sigma \in W_{\d, \min}/\approx_\d; \Sigma \preceq w\}$. 

We also have
$$t_w \equiv t_{w'} \equiv t_s t_{s w'}=t_{s w'} t_{\d(s)}=-t_{s w'} \mod [H_0, H_0]_\d.$$

By inductive hypothesis, the image of $t_{s w'}$ in $\overline{H_{0, \d}}$ is $(-1)^{\ell(s w')-\ell(\Sigma_{s w'})} t_{\Sigma_{s w'}}$. Hence the image of $t_w$ in $\overline{H_{0, \d}}$ is $(-1)^{\ell(w)-\ell(\Sigma_{s w'})} t_{\Sigma_{s w'}}$. 

\subsection{} For any $w \in W$, we denote by $\co_w$ the image of $\Sigma_w$ under the map $W_{\d, \min}/\approx_\d \to cl(W)_\d$. Then $\co_w$ is the maximal element in $\{\co \in cl(W)_\d; \co \preceq w\}$.
 
Now we discuss some application to class polynomials. 
 
Let $w \in W$. By Proposition \ref{class}, for any $q \neq 0$, $$T_{w, q}=\sum_{\co \in cl(W)_\d} f_{w, \co} T_{\co, q} \in \overline{H_{q, \d}}.$$

By the same argument as in Proposition \ref{sigma-w}, $f_{w, \co}=0$ unless $\co \preceq \co_w$. 

Moreover, by Proposition \ref{q-0-span}, there exists $a_{w, \Sigma} \in \L$ such that
$$T_w=\sum_{\Sigma \in W_{\d, \min}/\approx_\d} a_{w, \Sigma} T_{\Sigma} \in \overline{\bH}_\d.$$
 
Let $p: W_{\d, \min}/\approx_\d \to cl(W)_\d$ be the natural map. Then for any $q \neq 0$, $$T_{w, q}=\sum_{\Sigma \in W_{\d, \min}/\approx_\d} a_{w, \Sigma} \mid_{\qq=q} T_{p(\Sigma), q} \in \overline{H_{q, \d}}.$$
 
Therefore for any $\co \in cl(W)_\d$, $\sum_{p(\Sigma)=\co} a_{w, \Sigma}=f_{w, \co}$. 

By Proposition \ref{sigma-w}, $$a_{w, \Sigma} \in \begin{cases} (-1)^{\ell(w)-\ell(\Sigma_w)}+\qq \L, & \text{ if } \Sigma=\Sigma_w; \\ \qq \L, & \text{ otherwise}. \end{cases}$$
 
Therefore $$f_{w, \co} \in \begin{cases} (-1)^{\ell(w)-\ell(\Sigma_w)}+q \ZZ[q], & \text{ if } \Sigma_w \subset \co; \\ q \ZZ[q], & \text{ otherwise}. \end{cases}$$
\end{proof}
 
\section*{Acknowledgement}
We thank D. Ciubotaru, G. Lusztig and S. Nie for helpful discussions. We thank the referee for his/her valuable suggestions.

\end{document}